\newtheorem{thm}[equation]{Theorem}
\newtheorem{cor}[equation]{Corollary}
\newtheorem{lem}[equation]{Lemma}
\newtheorem{prop}[equation]{Proposition}
\theoremstyle{definition}
\theoremstyle{remark}
\newtheorem{rem}[equation]{Remark}
\newtheorem{exm}[equation]{Example}
\newcommand{\C}[1]{\mathscr{#1}}
\newcommand{\ob}{\operatorname{Ob}}
\newcommand{\cats}[2]{\operatorname{Cat}_{#2}(#1)}
\def\r{\rightarrow} % flecha -->
\def\into{\rightarrowtail}
\def\onto{\twoheadrightarrow}
\newcommand{\id}[1]{\mathrm{id}_{#1}}
\def\hom{\operatorname{Hom}}
\def\ho{\operatorname{Ho}}
\def\st{\stackrel} % abreviatura de \stackrel
\def\unit{\mathbf{1}} % abreviatura de \underline
\def\To{\longrightarrow}
\newcounter{rpage}
\newcounter{trunco}
\begin{document}

\title%[Strictifying homotopy algebras]
{On the unit of a monoidal model category}%
\author{Fernando Muro}%
\address{Universidad de Sevilla,
Facultad de Matem\'aticas,
Departamento de \'Algebra,
Avda. Reina Mercedes s/n,
41012 Sevilla, Spain}
\email{fmuro@us.es}
\urladdr{http://personal.us.es/fmuro}

\subjclass[2010]{55U35, 55P42}
\keywords{Monoidal model category, $S$-module, spectrum, enriched category, coloured operad.}

\begin{abstract}
In this paper we show how to modify cofibrations in a monoidal model category so that the tensor unit becomes cofibrant while keeping the same weak equivalences. We obtain aplications to enriched categories and coloured operads in stable homotopy theory.
\end{abstract}

\maketitle

% ---------------------------------------------------------------------------------

%\numberwithin{equation}{section}

A \emph{monoidal model category} is a model category $\C M$ with a monoidal structure, consisting of a tensor product $\otimes\colon\C M\times\C M\r\C M$, a unit $\unit\in\ob\C M$, and coherent associativity and unit isomorphisms, such that the following two axioms hold:
\begin{itemize}
\item \emph{Push-out product axiom}: Given cofibrations $f\colon X\r Y$ and $g\colon U\r V$, their push-out product $f\odot g\colon X\otimes V\cup_{X\otimes U}Y\otimes U\r Y\otimes V$ is a cofibration. Moreover, if $f$ or $g$ is a trivial cofibration then so is $f\odot g$.
\item \emph{Unit axiom}: There exists a cofibrant resolution of the tensor unit $q\colon\tilde\unit \st{\sim}\r\unit$ (i.e.~a weak equivalence with cofibrant source) such that, for any cofibrant object $X$ in $\C M$, $q\otimes X$ and $X\otimes q$ are weak equivalences.
\end{itemize}
This is essentially Hovey's definition \cite[\S4]{hmc} with Schwede--Shipley's terminology \cite{ammmc}. It induces a monoidal structure on the homotopy category $\ho\C M$. In the symmetric case, if we want to equip the category of monoids with a transferred model structure, we can include the \emph{monoid axiom} \cite[Definition 3.3]{ammmc}. 

In recent applications, there seems to be a pressing need for a cofibrant tensor unit $\unit$, e.g.~\cite{htec, lurieha, amseco}. However, examples with non-cofibrant tensor units, such as $S$-modules \cite{ekmm} or symmetric and diagram spectra with the positive stable model structure \cite{acmccrs, mcds}, are indispensable in brave new algebraic geometry \cite[\S2.4]{hagII}.  Lewis--Mandell \cite{mmmc} and more recently the author \cite{htnso2, manso, dkhtec} developed some techniques to deal with non-cofibrant tensor units under mild extra assumptions. One of them is the \emph{very strong unit axiom}, which is the strengthening of the unit axiom where $X$ can be any object. This new axiom holds in all monoidal model categories known to the author since, in all of them, tensoring with a cofibrant object preserves weak equivalences, see Corollary \ref{bueno} below. %In particular, the very strong unit axiom holds in monoidal model categories with cofibrant tensor unit (taking $q$ to be the identity in $\unit$), in symmetric spectra with 
% the positive stable model structure, and 
%more generally 
%in the base category of any homotopical algebraic geometry context in the sense of \cite{hagII}.

In this paper, we prove that we can equip any suitable monoidal model category with a different model structure with the same weak equivalences where the tensor unit is cofibrant. This new model structure is minimal in a certain sense. 

Any monoidal category has an underlying set functor $\C M(\unit,-)\colon\C M\r\operatorname{Set}$. A map in $\C M$ is said to be \emph{surjective} if the induced map on underlying sets is surjective. Notice that the tensor unit is cofibrant in $\C M$ if and only if all trivial fibrations are surjective.

\begin{thm}\label{1}
Any combinatorial monoidal model category $\C M$ satisfying the very strong unit axiom admits a combinatorial monoidal model structure $\tilde{\C M}$ with the same weak equivalences and whose trivial fibrations are the surjective trivial fibrations in $\C M$. If $\C M$ is right or left proper then so is $\tilde{\C M}$. If $\C M$ is symmetric and satisfies the monoid axiom then so does $\tilde{\C M}$.
\end{thm}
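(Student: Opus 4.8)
The plan is to keep the weak equivalences of $\C M$ and to \emph{enlarge} the cofibrations by declaring $\emptyset\to\unit$ to be one. Concretely, if $I$ is a set of generating cofibrations of $\C M$, I would take $\tilde I=I\cup\{\emptyset\to\unit\}$ as generating cofibrations of $\tilde{\C M}$, keeping the weak equivalences of $\C M$. A map has the right lifting property against $\emptyset\to\unit$ precisely when it is surjective in the sense above, so the maps with the right lifting property against $\tilde I$ are exactly the surjective trivial fibrations of $\C M$; these are the intended trivial fibrations of $\tilde{\C M}$, and being in particular trivial fibrations they are weak equivalences. The tool to produce the model structure is Jeff Smith's recognition theorem for combinatorial model categories: the weak equivalences of $\C M$ form an accessible, accessibly embedded subcategory of the arrow category closed under the two-out-of-three property and under retracts, the class just described is contained in them, and it remains to check that the class of $\tilde I$-cofibrations that are weak equivalences is closed under pushouts and transfinite compositions.

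The linchpin, and the step I expect to carry the whole argument, is the endofunctor $\tilde\unit\otimes(-)$, where $q\colon\tilde\unit\st{\sim}\r\unit$ is the cofibrant resolution supplied by the very strong unit axiom. On the one hand it sends the new cofibrations to old ones: since $\otimes$ preserves colimits in each variable we have $\tilde\unit\otimes(\emptyset\to\unit)\cong(\emptyset\to\tilde\unit)$, a cofibration because $\tilde\unit$ is cofibrant, while for $i\in I$ the map $\tilde\unit\otimes i$ is the push-out product $(\emptyset\to\tilde\unit)\odot i$, a cofibration by the push-out product axiom of $\C M$; hence $\tilde\unit\otimes(-)$ carries every $\tilde I$-cofibration to a cofibration of $\C M$. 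On the other hand, the very strong unit axiom says that $q\otimes X\colon\tilde\unit\otimes X\to X$ is a weak equivalence for \emph{every} $X$, so naturality of $q\otimes(-)$ together with two-out-of-three shows that a map $f$ is a weak equivalence if and only if $\tilde\unit\otimes f$ is. Combining the two observations, $\tilde\unit\otimes(-)$ takes an $\tilde I$-cofibration that is a weak equivalence to a trivial cofibration of $\C M$. Closure under pushouts and transfinite compositions now follows: these colimits are preserved by $\tilde\unit\otimes(-)$, an acyclic $\tilde I$-cofibration is sent to a $\C M$-trivial cofibration, pushouts and transfinite composites of the latter remain trivial cofibrations in $\C M$, and the weak equivalence is reflected back by the criterion just stated. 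This yields the model structure $\tilde{\C M}$, with generating trivial cofibrations produced by the theorem; note that the identity is then a Quillen equivalence $\C M\to\tilde{\C M}$.

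For the monoidal structure I would verify the push-out product axiom on generators. The coproduct-with-$\unit$ generator is essentially inert: for any $f$ one has $f\odot(\emptyset\to\unit)\cong f$ up to the unit isomorphism, so the products $i\odot(\emptyset\to\unit)$, $(\emptyset\to\unit)\odot(\emptyset\to\unit)$ and $j\odot(\emptyset\to\unit)$ reduce to generators again. For the remaining products $i\odot i'$ and $j\odot i'$ with $i,i'\in I$ and $j$ a generating trivial cofibration, I apply $\tilde\unit\otimes(-)$: by associativity and cocontinuity $\tilde\unit\otimes(f\odot g)\cong(\tilde\unit\otimes f)\odot g$, which reduces the statement to the push-out product axiom of $\C M$ for the old (trivial) cofibration $\tilde\unit\otimes f$ against $g\in I$, the weak equivalence being reflected as before. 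This gives both halves of the push-out product axiom for $\tilde{\C M}$.

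The remaining properties follow the same pattern. Since $\tilde{\C M}$ has the same weak equivalences and fewer fibrations than $\C M$, every $\tilde{\C M}$-fibration is a $\C M$-fibration and right properness is inherited verbatim. For left properness one pushes out a weak equivalence along a new cofibration, applies $\tilde\unit\otimes(-)$ to land on a pushout of a weak equivalence along a $\C M$-cofibration, invokes left properness of $\C M$, and reflects the resulting weak equivalence. The monoid axiom is handled identically: a map in the cellular closure of $(\text{acyclic cofibrations})\otimes\C M$ is sent by $\tilde\unit\otimes(-)$, using $\tilde\unit\otimes(j\otimes Z)\cong(\tilde\unit\otimes j)\otimes Z$, into the cellular closure of $(\text{acyclic cofibrations of }\C M)\otimes\C M$, which lies in the weak equivalences by the monoid axiom of $\C M$, and the conclusion is reflected back. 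The main obstacle throughout is precisely the closure of the acyclic new cofibrations, i.e.\ Smith's hypothesis; once the comparison functor $\tilde\unit\otimes(-)$ and the very strong unit axiom reduce it to $\C M$, every other axiom reduces in the same way.
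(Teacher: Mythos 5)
Your proposal is correct in substance but takes a genuinely different route from the paper's. The paper does not prove Theorem \ref{1} directly: it deduces it from the more general Theorem \ref{2}, which assumes only cofibrant generation plus explicit smallness hypotheses, and whose proof builds an explicit set of generating trivial cofibrations $\tilde J=J\cup\{ji_1\colon\tilde\unit\to C\}$ (with $C$ coming from a factorization of $(q,\id{\unit})\colon\tilde\unit\amalg\unit\to\unit$ through a cofibration) and then verifies the six conditions of the recognition theorem for cofibrantly generated model categories \cite[Theorem 2.1.19]{hmc} by hand. You instead invoke Smith's recognition theorem for combinatorial model categories, which lets you bypass the construction of $\tilde J$ entirely: the only nontrivial hypothesis is closure of the acyclic $\tilde I$-cofibrations under pushout and transfinite composition, and you dispatch it with the functor $\tilde\unit\otimes(-)$. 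That functor is the shared engine of both proofs: your observation that it carries $\tilde I$-cofibrations to cofibrations of $\C M$ and preserves and reflects weak equivalences is precisely Lemma \ref{white} together with the opening moves of the paper's proof of Theorem \ref{2}, and your uniform reduction of the pushout-product axiom, left properness and the monoid axiom by the same trick mirrors the paper (your left properness argument is in fact more direct than the paper's, which goes through the cell-complex decomposition of Lemma \ref{10}). What your route buys: a shorter proof of Theorem \ref{1} as stated. What it costs: Smith's theorem needs local presentability and accessibility of the weak equivalences, so your argument cannot yield Theorem \ref{2}, which is what makes the construction applicable to the paper's topological examples ($S$-modules, positive stable structures on topological diagram spectra) that are not combinatorial; and you lose the explicit description of $\tilde J$, hence of the fibrations and fibrant objects of $\tilde{\C M}$, which the paper exploits in Examples \ref{ex}--\ref{ex3}.

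One small repair is needed: in your verification of the pushout-product axiom you only treat $j\odot i$ with the acyclic map on the left. In a non-symmetric $\C M$ the case $i\odot j$ is not formally the same, and applying $\tilde\unit\otimes(-)$ gives $(\tilde\unit\otimes i)\odot j$, where the acyclic factor $j$ is still not a trivial cofibration of $\C M$, so the reduction fails as written. The fix is the mirror argument with $(-)\otimes\tilde\unit$, which also sends $\tilde I$-cofibrations to cofibrations of $\C M$ and preserves and reflects weak equivalences, because the very strong unit axiom controls $X\otimes q$ as well as $q\otimes X$ (this is the $(3)\Leftrightarrow(5)$ half of Lemma \ref{white}); the paper likewise dismisses its mirror case as ``similar.''
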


\begin{exm}\label{ex}
Let $\C M=\operatorname{Sp}^\Sigma$ be the category of symmetric spectra of simplicial sets equipped with the positive stable model structure \cite[Proposition 3.1]{acmccrs}, where the sphere spectrum $\unit=S$ is not cofibrant. It is proper, symmetric,  and satisfies the monoid axiom. 
The very strong unit axiom is a consequence of Corollary \ref{bueno}, \cite[Lemma 5.4.4]{se}, and the fact that cofibrations in the positive stable model structure are also cofibrations in the ordinary stable model structure. Theorem \ref{1} applies and trivial fibrations in $\tilde{\C M}$ are the maps $f\colon X\r Y$ such that $f_{n}\colon X_{n}\r Y_{n}$ is a trivial Kan fibration for any $n>0$ and $f_{0}\colon X_{0}\r Y_{0}$ is surjective on vertices. 

The model structure $\tilde{\C M}$ is, strictly, between the ordinary and the positive stable model structures. Indeed, $\tilde{\C M}\neq\C M$ since $\unit$ is cofibrant in the former but not in the latter. We now exhibit a trivial fibration in $\tilde{\C M}$ which is not an ordinary stable trivial fibration. Let $X$ be a fibrant replacement of the sphere spectrum in the ordinary stable model structure and let $X'\subset X$ be the subspectrum with $X_n'=X_n$ for $n>0$ and $X_0'=$ the discrete simplicial set with the same vertices as $X_0$. The Kan complex $X_0$ is not discrete since its homotopy groups are the stable homotopy groups of the sphere spectrum, therefore $X_0'\subset X_0$ is not a trivial Kan fibration. In particular, $X'\subset X$ is a trivial fibration in $\tilde{\C M}$ which is not an ordinary stable trivial fibration. 

As far as we know, the model structure $\tilde{\C M}$ on symmetric spectra is new.
\end{exm}

We will actually prove the following result, with weaker but uglier hypotheses. Denote by $\varnothing$ the initial object of $\C M$.

\begin{thm}\label{2}
Let $\C M$ be a cofibrantly generated monoidal model category satisfying the very strong unit axiom for a certain cofibrant resolution $q\colon\tilde\unit\st{\sim}\r \unit$. Let
$$\tilde\unit\amalg\unit\st{j}\into C\mathop{\r}^p_\sim \unit$$
be a factorization of $(q,\id{\unit})\colon \tilde\unit\amalg\unit\r \unit$ into a cofibration followed by a weak equivalence in $\C M$ and let $i_1\colon \tilde\unit\r\tilde\unit\amalg\unit$ be the inclusion of the first factor of the coproduct. 
Assume that $\C M$ has %presentable tensor unit $\unit$ and 
sets $I$ and $J$ of generating cofibrations and generating trivial cofibrations, respectively, such that 
the domains of $I$ are small relative to $\tilde I$-cell for $\tilde I=I\cup\{\varnothing\r \unit\}$ and $\tilde\unit$ and the domains of $J$ are small relative to $\tilde J$-cell for $\tilde J=J\cup\{ji_1\colon \tilde \unit\r C\}$. 
Then there is a cofibrantly generated monoidal model category $\tilde{\C M}$ with the same underlying category and weak equivalences as $\C M$, set of generating cofibrations $\tilde I$ and set of generating trivial cofibrations $\tilde J$. If $\C M$ is right or left proper then so is $\tilde{\C M}$.  If $\C M$ is symmetric and satisfies the monoid axiom then so does $\tilde{\C M}$.
\end{thm}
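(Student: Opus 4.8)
The plan is to produce $\tilde{\C M}$ from the recognition theorem for cofibrantly generated model categories \cite[Theorem 2.1.19]{hmc}, with weak equivalences $\we$ those of $\C M$ and generating sets $\tilde I,\tilde J$. Since $\we$ is inherited it satisfies two-out-of-three and is closed under retracts, and the smallness of the domains of $\tilde I$ and $\tilde J$ is exactly the hypothesis. The first step is to identify the relative injectives. A map has the right lifting property against $\varnothing\r\unit$ iff it is surjective on $\C M(\unit,-)$, so $\tilde I\text{-}\mathrm{inj}$ is precisely the class of surjective trivial fibrations of $\C M$, as in Theorem \ref{1}.

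I record two facts about the factorization. As $p$ and $q=p\,j\,i_{1}$ are weak equivalences, $j\,i_{1}\colon\tilde\unit\r C$ is one by two-out-of-three; and since $i_{2}\colon\unit\r\tilde\unit\amalg\unit$ is a pushout of the cofibration $\varnothing\r\tilde\unit$ while $p\,j\,i_{2}=\id{\unit}$ splits the weak equivalence $p$, the map $j\,i_{2}\colon\unit\r C$ is a \emph{trivial cofibration of $\C M$}. These give the two lifting conditions at once. If $f$ is a surjective trivial fibration, a square under $j\,i_{1}$ is solved by first lifting the $\unit$-component through surjectivity, then lifting the cofibration $j$; hence $\tilde I\text{-}\mathrm{inj}\subseteq\we\cap\tilde J\text{-}\mathrm{inj}$. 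Conversely, if $f\in\we\cap\tilde J\text{-}\mathrm{inj}$ then $f$ is a trivial fibration of $\C M$, and given $y\colon\unit\r Y$ one lifts $yq$ over the cofibrant $\tilde\unit$, uses the right lifting property against $j\,i_{1}$ to extend to $\ell\colon C\r X$, and obtains the element $\ell\,j\,i_{2}$ over $y$; thus $f$ is surjective and $\we\cap\tilde J\text{-}\mathrm{inj}\subseteq\tilde I\text{-}\mathrm{inj}$. Finally $j\,i_{1}\in\tilde I\text{-}\cof$, since $j\in I\text{-}\cof$ and $i_{1}$ is a pushout of $\varnothing\r\unit$, so $\tilde J\text{-}\mathrm{cell}\subseteq\tilde I\text{-}\cof$. \textbf{The one genuinely delicate condition is $\tilde J\text{-}\mathrm{cell}\subseteq\we$}, i.e.\ that every pushout of $j\,i_{1}$ along a map $g\colon\tilde\unit\r A$ is a weak equivalence. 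Here the very strong unit axiom is indispensable: since $p\,j\,i_{1}=q$ is a weak equivalence with the genuine trivial cofibration $j\,i_{2}$ splitting $p$, the strict pushout $A\cup_{\tilde\unit}C$ ought to compute the homotopy pushout of $A\xleftarrow{g}\tilde\unit\xrightarrow{q}\unit$, which is weakly equivalent to $A$; and the axiom $q\otimes X\in\we$ for \emph{every} object $X$ (not merely cofibrant ones) is exactly what forces the strict pushout to realise this homotopy pushout in the unit direction despite $\unit$ being non-cofibrant. With this, \cite[Theorem 2.1.19]{hmc} yields $\tilde{\C M}$.

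Next I verify the push-out product axiom, which reduces to the generators $\tilde I,\tilde J$. Writing $u\colon\varnothing\r\unit$, one has $u\odot f\cong f$ and $u\odot u\cong u$ from $\varnothing\otimes(-)=\varnothing$ and the unit isomorphism, so the cofibration part reduces to $I\odot I\subseteq I\text{-}\cof$, which holds in $\C M$. For the acyclic part the only new case is $f\odot(j\,i_{1})$ with $f\colon X\r Y$ in $I$. It lies in $\tilde I\text{-}\cof$ by the previous sentence, and it is acyclic by a direct two-out-of-three: for cofibrant $X$ the map $X\otimes(j\,i_{1})$ is a \emph{trivial} cofibration of $\tilde{\C M}$ (it is acyclic because $X\otimes q$ is a weak equivalence by the very strong unit axiom, while $X\otimes p$ is a weak equivalence, being split by the trivial cofibration $X\otimes(j\,i_{2})$), so its pushout $Y\otimes\tilde\unit\r P$ along $f\otimes\tilde\unit$ is again a trivial cofibration, and since the composite $Y\otimes\tilde\unit\r P\r Y\otimes C$ equals the weak equivalence $Y\otimes(j\,i_{1})$, the map $f\odot(j\,i_{1})\colon P\r Y\otimes C$ is a weak equivalence. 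The unit axiom for $\tilde{\C M}$ is automatic, since $\unit$ is now cofibrant and $\id{\unit}$ is a cofibrant resolution.

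It remains to transfer properness and the monoid axiom. Right properness is inherited because $\tilde{\C M}$ has the same weak equivalences and fewer fibrations than $\C M$. Left properness requires that pushouts along the new cofibrations preserve weak equivalences; by the reduction to generators this amounts to controlling cobase change along $u\colon\varnothing\r\unit$, i.e.\ the functor $(-)\amalg\unit$, which is handled by the same unit-direction comparison that settles $\tilde J\text{-}\mathrm{cell}\subseteq\we$. For the monoid axiom in the symmetric case, the only new generating trivial cofibration is $j\,i_{1}$, and $(j\,i_{1})\otimes Z$ is a weak equivalence for \emph{every} object $Z$: indeed $(j\,i_{2})\otimes Z\in\we$ by the monoid axiom of $\C M$ and $q\otimes Z\in\we$ by the very strong unit axiom, whence $p\otimes Z$ and then $(j\,i_{1})\otimes Z$ are weak equivalences by two-out-of-three. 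The relevant class is therefore generated over the monoid-axiom class of $\C M$ by weak equivalences and is closed under the requisite pushouts and transfinite compositions. Throughout, the sole hard point is the acyclicity of the pushouts of $j\,i_{1}$, where the full strength of the very strong unit axiom is what is needed.
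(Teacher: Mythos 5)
Your proposal reproduces the paper's skeleton faithfully up to a point: the recognition theorem \cite[Theorem 2.1.19]{hmc}, the identification of $\tilde I$-injectives as surjective trivial fibrations, and the two lifting arguments (for $\tilde I\text{-}\mathrm{inj}\subseteq \tilde J\text{-}\mathrm{inj}$ and for $\we\cap\tilde J\text{-}\mathrm{inj}\subseteq\tilde I\text{-}\mathrm{inj}$) coincide with the paper's. But the step you yourself single out as the delicate one, $\tilde J\text{-}\mathrm{cell}\subseteq\we$, is not actually proved: you assert that the strict pushout of $ji_1$ ``ought to compute the homotopy pushout'' and that the very strong unit axiom ``is exactly what forces'' this, with no argument. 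Moreover, even if each single pushout of $ji_1$ were a weak equivalence, that would not suffice: a relative $\tilde J$-cell complex is a \emph{transfinite composition} of pushouts of maps of $\tilde J$, and weak equivalences are closed under neither cobase change nor transfinite composition, so the reduction to single pushouts is itself unjustified. The paper's mechanism --- the idea missing from your proposal --- is to apply $\tilde\unit\otimes-$: by Lemma \ref{white} the very strong unit axiom says this functor preserves \emph{and reflects} weak equivalences, so it suffices to show $\tilde\unit\otimes f$ is a weak equivalence; and $\tilde\unit\otimes f$ is a relative $(\tilde\unit\otimes\tilde J)$-cell complex whose cells are honest trivial cofibrations of $\C M$ (indeed $\tilde\unit\otimes(ji_1)$ is an $I$-cofibration because $\tilde\unit$ is cofibrant and $\tilde\unit\otimes\unit\cong\tilde\unit$, and it is a weak equivalence because $ji_1$ is), and trivial cofibrations \emph{are} closed under pushout and transfinite composition. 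Exactly the same transport handles left properness (via Lemma \ref{10}) and the monoid axiom; in both places your proposal again only establishes that individual maps such as $(ji_1)\otimes Z$ are weak equivalences and then asserts, without justification, closure of ``the relevant class'' under pushouts and transfinite compositions.

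There is a second concrete error in your verification of the push-out product axiom: you take $f\colon X\r Y$ in $I$ and argue ``for cofibrant $X$'' that $X\otimes(ji_1)$ is a trivial cofibration of $\tilde{\C M}$, and later that $Y\otimes(ji_1)$ is a weak equivalence --- but domains and codomains of generating cofibrations need not be cofibrant, and nothing in the hypotheses of Theorem \ref{2} makes them so; the very strong unit axiom controls $X\otimes q$ for arbitrary $X$, not $X\otimes(ji_1)$ or $X\otimes(ji_2)$. The paper avoids this entirely: it shows $(ji_1)\odot g$ is an $I$-cofibration by an explicit decomposition into a coproduct-with-$g$ map (using $\unit\otimes g\cong g$) followed by $j\odot g$, and gets acyclicity by noting that $\tilde\unit\otimes((ji_1)\odot g)=(\tilde\unit\otimes(ji_1))\odot g$ is a $J$-cofibration by the push-out product axiom in $\C M$, then reflecting the weak equivalence back along $\tilde\unit\otimes-$ via Lemma \ref{white}. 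No cofibrancy of domains is needed anywhere in that argument.
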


Notice that the identity functor is a  monoidal Quillen equivalence $\C M\rightleftarrows\tilde{\C M}$ in the sense of \cite[Definition 4.2.16]{hmc}. 

\begin{rem}
Generating cofibrations $\tilde I$ in $\tilde{\C M}$ do not depend on any choice, hence the whole model structure is independent of choices. The factorization of $(q,\id{\unit})$ can be constructed by taking a cylinder $\bar\jmath =(\bar\jmath_1,\bar\jmath_2)\colon \tilde\unit\amalg\tilde\unit\into\tilde C$ for $\tilde \unit$ in $\C M$ and then the push-out of $\bar \jmath_2$ along $q$. Therefore, $X$ is fibrant in $\tilde{\C M}$ if and only if it is fibrant in $\C M$ and any map $\tilde\unit\r X$ is homotopic in $\C M$ to a map which factors through $q\colon\tilde \unit\st{\sim}\r \unit$. This property holds for a certain cofibrant replacement of the tensor unit if and only if it holds for anyone. 
\end{rem}

Theorem \ref{2} is better suited for model categories of topological nature, as in the following two examples.

\begin{exm}\label{ex2}
Let $\C M$ be any of the symmetric monoidal model categories of diagram spectra built upon the category $\operatorname{Top}_*$ of pointed compactly generated toplogical spaces \cite[Definition 2.4.21 (3)]{hmc} with the positive stable model structure in \cite[Theorem 14.2]{mcds}, i.e.~symmetric spectra $\Sigma\C S$, orthogonal spectra $\C I\C S$, or the category of $\C W$-spaces $\C W\C T$. They are proper and satisfy the monoid axiom. The very strong unit axiom follows from Corollary \ref{bueno}, \cite[Proposition 12.7]{mcds}, and the fact that cofibrations in the positive stable model structure are also cofibrations in the ordinary stable model structure \cite[Theorem 9.2]{mcds}. 

The category $\C M$ is (co)tensored over $\operatorname{Top}_*$. Homotopies are maps from the cylinders constructed by smashing with the interval with an outer base point $[0,1]_+$. An \emph{$h$-cofibration} is a map satisfying the homotopy extension property. These maps can be characterized by the left lifting property with respect to a class of maps, compare the proof of \cite[Theorem 3.1]{smss}, hence $h$-cofibrations are closed under retracts, push-outs and transfinite compositions. Ordinary (and hence positive) stable cofibrations in $\C M$ are $h$-cofibrations \cite[Lemma 5.5, Definition 5.9, Theorem 9.2]{mcds}. The $h$-cofibrations in $\operatorname{Top}_*$ are closed inclusions, in particular $h$-cofibrations in $\C M$  are spacewise closed inclusions.

%Inclusions of retracts and $h$-cofibrations in $\operatorname{Top}_*$ are closed inclusions, in particular inclusions of retracts and $h$-cofibrations in $\Sigma\C S$  are spacewise closed inclusions. Push-outs of $\varnothing\r\unit$ are inclusions $X\r X\amalg\unit$ of the first factor, which admit a retraction $(\id{X},0)\colon X\amalg\unit\r X$. A push-out of $ji_1$ is the composite of an $X\r X\amalg\unit$ and a positive cofibration, so it is a spacewise closed inclusion. 

All objects in $\operatorname{Top}_*$ are small relative to closed inclusions by cardinality reasons, therefore all objects in $\C M$ are small relative to $\tilde I$-cell and $\tilde J$-cell for any choice of $I$, $J$ and the factorization of $(q,\id{\unit})$, since maps in $\tilde I$ and $\tilde J$ are ordinary stable cofibrations in $\C M$.

% Let $I$ and $J$ be the sets of generating cofibrations and generating trivial cofibrations indicated in \cite[Theorem 14.2]{mcds}, i.e.~$F^+I$ and $K^+$, respectively. 
% Let us fix the cofibrant resolution of the sphere spectrum in \cite[Definition 8.4 and Lemma 8.6]{mcds}, $q=\lambda_0\colon\tilde\unit =F_1S^1\st{\sim}\rightarrow F_0S^0=\unit$. Take any factorization of $(q,\id{\unit})$. Any relative $\tilde I$- or $\tilde J$-cell complex is a transfinite composition of cofibrations in $\C M$ and inclusions of first factors $X\r X\amalg\unit$. Cofibrations in $\C M$ are $h$-cofibrations in the sense of \cite[\S5]{mcds}, i.e.~they satisfy the homotopy extension property. Here we use \cite[Cofibration Hypothesis 5.3 and Lemma 5.5]{mcds}. The map $X\r X\amalg\unit$ is also an $h$-cofibration for any symmetric spectrum $X$ for obvious reasons (any homotopy from $X$ starting at the restriction of a map from $X\amalg \unit$ extends to a homotopy from $X\amalg \unit$ by considering the trivial homotopy on the second factor). Therefore, \cite[Definition 5.6 and Lemma 5.7]{mcds} prove the smallness conditions for $\tilde\unit =F_1S^1$ and for the sources of $I$ and $J$.

A trivial fibration in $\tilde{\C M}$ is a map $f\colon X\r Y$ such that $f_{n}\colon X_{n}\r Y_{n}$ is a Serre fibration and a weak equivalence of spaces for $n>0$ and $f_{0}\colon X_{0}\r Y_{0}$ is surjective. 
Taking the cofibrant resolution of the sphere spectrum in \cite[Definition 8.4 and Lemma 8.6]{mcds}, $q=\lambda_0\colon\tilde\unit =F_1S^1\st{\sim}\rightarrow F_0S^0=\unit$, we see that an object $X$ is fibrant in $\tilde{\C M}$ if and only if it is a positive $\Omega$-spectrum such that the structure map $X_0\r\Omega X_1$ induces a surjection on $\pi_0$. This characterization of fibrant objects is also valid in Example \ref{ex}.

It is possible to check, as in Example \ref{ex}, that $\tilde{\C M}$ is strictly between $\C M$ and the ordinary stable model structure, e.g.~in symmetric or orthogonal spectra, if $X$ is an ordinary stable fibrant replacement of the sphere spectrum and $X'$ is defined as $X'_{n}=X_{n}$, $n>0$, and $X_{0}'=$ the set $X_{0}$ with the discrete topology, the identity on underlying sets induces a map $X'\r X$ which is a trivial fibration in $\tilde{\C M}$ but not an ordinary stable trivial fibration.

We have not previously seen the model structure $\tilde{\C M}$ in the literature.
\end{exm}

\begin{exm}\label{ex3}
Let $\C M=\C M_S$ be the model category of $S$-modules \cite[Theorem VII.4.6]{ekmm}. It is a cofibratly generated symmetric monoidal model category satisfying the monoid axiom, see Proposition \ref{sar} below, and it is right proper since all objects are fibrant. The very strong unit axiom follows from Corollary \ref{bueno} and \cite[Theorem III.3.8]{ekmm}. 

Cofibrations in $\C M_S$ are spacewise closed inclusions, see \cite[Cofibration hypothesis and the paragraph afterwards]{ekmm} and \cite[App.~Proposition 3.9]{esht}. Inclusions of retracts in $\operatorname{Top}_*$ are closed, hence inclusions of retracts in $\C M_S$ are spacewise closed inclusions.

A push-out of $\varnothing\r\unit$ is the same as an inclusion of first factor $X\r X\amalg\unit$. This map admits a retraction $(\id{X},0)\colon X\amalg\unit\r X$. A push-out of $ji_1$ is a composite of such an inclusion $X\r X\amalg\unit$ and a cofibration in $\C M_S$, so it is a spacewise closed inclusion.

The smallness condition follows for any choice of $I$, $J$ and factorization of $(q,\id{\unit})$, since all objects in $\operatorname{Top}_*$ are small relative to closed inclusions. 

% 
% 
% The sets of generating cofibrations and generating trivial cofibrations in $\C M$ are $I=\{f_{q,n}\colon S\wedge_{\C L}\mathbb L\Sigma^\infty_q S^n\r S\wedge_{\C L}\mathbb L\Sigma^\infty_q CS^n\}_{q,n\geq 0}$ and $J=\{g_{q,n}\colon S\wedge_{\C L}\mathbb L\Sigma^\infty_q (CS^n\wedge \{0\}_+)\r S\wedge_{\C L}\mathbb L\Sigma^\infty_q (CS^n\wedge [0,1]_+)\}_{q,n\geq 0}$, respectively, see \cite[Theorem VII.4.14 and the proof of Lemma VII.5.6]{ekmm}. Consider the cofibrant resolution of the sphere spectrum $q\colon \tilde \unit=S\wedge_{\C L}\mathbb{L}S\st{\sim}\r S=\unit$ in \cite[\S II.1]{ekmm}, and any factorization of $(q,\id{\unit})$. Any relative $\tilde I$- or $\tilde J$-cell complex is a transfinite composition of cofibrations in $\C M$ and inclusions of first factors $X\r X\amalg\unit$. Cofibrations in $\C M$ are spacewise closed inclusions (compare the proof of \cite[Lemma VII.5.2]{mcds}). The map $X\r X\amalg\unit$ is also as spacewise closed 
% inclusion since it 
% admits a retraction, e.g.~$(\id{X},0)\colon X\amalg\unit\r X$. Therefore, \cite[Proposition III.1.7]{ekmm} (which should demand inclusions to be closed, compare \cite[Proposition 2.4.2]{hmc}) proves the smallness condition for $\tilde\unit =S\wedge_{\C L}\mathbb{L}S$  and for the sources of $I$ and $J$.

Taking the cofibrant resolution of the sphere spectrum $q\colon \tilde \unit=S\wedge_{\C L}\mathbb{L}S\st{\sim}\r S=\unit$ in \cite[\S II.1]{ekmm}, we see that an $S$-module $X$ is fibrant in $\tilde{\C M}_S$ if and only if any map of spectra $S\r X$ is homotopic to a map of $S$-modules.
 
Such a modification of the model category of $S$-modules turning the sphere spectrum into a cofibrant object seems to be new in the literature.
\end{exm}

% \begin{rem}
% A model structures on the category of small categories enriched in symmetric spectra has been considered in \cite{htsc}. However, to the best of our knowledge, there is no similar results for $S$-modules. Berger--Moerdijk \cite[Theorem 1.10]{htec} recently proved a general result about the existence of nice induced model structures on $\C M$-enriched small categories for $\C M$ an appropriate symmetric monoidal model category. One of the hypotheses of this theorem is the cofibrancy of the tensor unit. This rules out the standard model structure $\C M=\C M_S$ on $S$-modules, but not the new one $\tilde{\C M}$. We now check that $\tilde{\C M}$ does satisfy all the hypotheses.
% 
% A $\otimes$-cofibration in $\tilde{\C M}$ is a relative $(\tilde I\otimes\ob(\C M))$-cell complex. A map in $I\otimes \ob(\C M)$ is spacewise of the form $\Sigma^nX\r C\Sigma^nX$, $n\geq 0$, which is a closed inclusion. Moreover, $(\varnothing\r\unit)\otimes X=(\varnothing\r X)$ is spacewise a closed inclusion since points are closed in compactly generated spaces. Hence $\otimes$-cofibrations in $\tilde{\C M}$ are spacewise closed inclusions. This implies that all $S$-modules are $\otimes$-small and that the sources of maps in $\tilde I$ are $\otimes$-finite. Hence it follows that $\tilde {\C M}$ is compactly generated, see the third paragraph after \cite[Definition 1.2]{htec}. Adequacy follows fro \cite[Lemma 1.3]{htec} since all objects in 
% 
% 
% \end{rem}

We start with a clarification concerning the unit axiom.

\begin{lem}
We can replace `there exists a' with `for any' in the definition of the unit axiom.
\end{lem}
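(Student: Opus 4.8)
The plan is to show that the unit property, once it holds for one cofibrant resolution, is independent of the chosen resolution; the reverse implication (that ``for any'' forces ``there exists'') is immediate once one knows a cofibrant resolution exists, so all the content lies in propagating the property from a single good resolution to an arbitrary one. The key homotopical input is that for a cofibrant object $X$ the functors $-\otimes X$ and $X\otimes-$ preserve weak equivalences between cofibrant objects. Indeed, applying the push-out product axiom to a (trivial) cofibration and to $\varnothing\r X$ shows that $-\otimes X$ and $X\otimes-$ send (trivial) cofibrations between cofibrant objects to (trivial) cofibrations, and Ken Brown's lemma then upgrades this to preservation of all weak equivalences between cofibrant objects.

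Now suppose $q\colon\tilde\unit\st{\sim}\r\unit$ is a cofibrant resolution satisfying the unit property, and let $q'\colon\tilde\unit'\st{\sim}\r\unit$ be an arbitrary cofibrant resolution. Since a coproduct of cofibrant objects is cofibrant, I would factor $(q,q')\colon\tilde\unit\amalg\tilde\unit'\r\unit$ as a cofibration followed by a trivial fibration
$$\tilde\unit\amalg\tilde\unit'\into W\st{\sim}\r\unit,$$
so that $W$ is again a cofibrant resolution of $\unit$, with structure map $p_W\colon W\st{\sim}\r\unit$. Composing the coproduct inclusions with the cofibration into $W$ yields cofibrations $\alpha\colon\tilde\unit\r W$ and $\alpha'\colon\tilde\unit'\r W$ over $\unit$; applying two-out-of-three to $q=p_W\alpha$ and $q'=p_W\alpha'$ shows that $\alpha$ and $\alpha'$ are weak equivalences between cofibrant objects.

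Fix a cofibrant object $X$. By the first paragraph $\alpha\otimes X$ is a weak equivalence, so from $q\otimes X=(p_W\otimes X)(\alpha\otimes X)$ and the hypothesis that $q\otimes X$ is a weak equivalence, two-out-of-three gives that $p_W\otimes X$ is a weak equivalence. Again by the first paragraph $\alpha'\otimes X$ is a weak equivalence, whence $q'\otimes X=(p_W\otimes X)(\alpha'\otimes X)$ is a weak equivalence. Running the identical argument with the tensor factors on the other side shows $X\otimes q'$ is a weak equivalence as well. Thus $q'$ satisfies the unit property, as required.

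The only delicate step is the first paragraph: one must check that the push-out product axiom really does force $-\otimes X$ to preserve trivial cofibrations between cofibrant objects, which rests on $\otimes$ preserving the initial object in each variable, so that the push-out product against $\varnothing\r X$ collapses to $-\otimes X$. Granting this, everything afterwards is a formal diagram chase with the two-out-of-three property, and no further hypotheses are needed.
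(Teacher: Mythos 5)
Your proof is correct, and its homotopical core — that for cofibrant $X$ the functors $-\otimes X$ and $X\otimes-$ preserve weak equivalences between cofibrant objects, via the push-out product axiom plus Ken Brown's lemma — is exactly the input the paper uses. Your closing caveat is also harmless: the monoidal model categories here are closed monoidal in Hovey's sense, so $\otimes$ preserves colimits (in particular the initial object) in each variable, and the push-out product against $\varnothing\to X$ does collapse to $-\otimes X$. Where you genuinely differ is in how the two resolutions are compared. The paper first fixes a reference resolution $q'\colon\tilde\unit'\twoheadrightarrow\unit$ which is a trivial fibration and, for an arbitrary resolution $q$, uses the lifting axiom (cofibrant source against trivial fibration) to produce a direct comparison $q=q'f$ with $f$ a weak equivalence between cofibrant objects; the lemma then follows from the equivalence ``axiom for some $q$ iff axiom for $q'$'' by transitivity through the fixed $q'$. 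You instead compare two arbitrary resolutions without any normalization, by factoring $(q,q')\colon\tilde\unit\amalg\tilde\unit'\to\unit$ into a cofibration followed by a trivial fibration, obtaining a third cofibrant resolution $W$ that receives cofibrations $\alpha,\alpha'$ from both, which are weak equivalences by 2-out-of-3. Your route buys a logically more direct statement (any one good resolution propagates to any other, with no detour through a distinguished trivial-fibration resolution and no ``iff'' bookkeeping), at the cost of introducing the auxiliary object $W$ and one extra application of 2-out-of-3 to extract that $p_W\otimes X$ is a weak equivalence; the paper's lifting argument is slightly more economical in objects but otherwise carries the same content.
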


\begin{proof}
Let $q'\colon \tilde\unit'\st{\sim}\onto\unit$ be a fixed cofibrant resolution of the tensor unit which is a trivial fibration. It suffices to prove that the monoid axiom is satisfied for some $q\colon \tilde\unit\st{\sim}\r\unit$ if and only if it is satisfied for $q'$. Since $\tilde\unit$ is cofibrant and $q'$ is a trivial fibration, we can factor $q$ as $q=q'f$ for a certain $f\colon\tilde\unit\st{\sim}\r\tilde\unit'$. This map $f$ is a weak equivalence by the 2-out-of-3 axiom. By the push-out product axiom and Ken Brown's lemma \cite[Lemma 1.1.12]{hmc}, tensoring with a cofibrant object $X$ preserves weak equivalences between cofibrant objects, so $f\otimes X$ and $X\otimes f$ are weak equivalences. The 2-out-of-3 axiom applied to $q\otimes X=(q'\otimes X)(f\otimes X)$ and $(X\otimes q)=(X\otimes q')(X\otimes f)$ proves the claim.
\end{proof}

The analogous result for the very strong unit axiom need not hold in general. It does hold, with essentially the same proof, if $\C M$ is symmetric and satisfies the monoid axiom.

The following characterization of the very strong unit axiom is used in the proof of Theorem \ref{2}. It is essentially \cite[Lemmas A.4 and A.5]{manso}. We offer here a full proof to clear any doubt about the necessity of the monoid axiom, which is always assumed therein. Part of this proof is due to David White \cite{whimo}.

\begin{lem}\label{white}
Let $\C M$ be a monoidal model category. Then $(1)\Leftrightarrow(4)+(5)$, $(2)\Leftrightarrow (4)$, and $(3)\Leftrightarrow (5)$, where:
\begin{enumerate}
 \item $\C M$ satisfies the very strong unit axiom for a cofibrant resolution $q\colon\tilde\unit\st{\sim}\r\unit$.
 \item The functor $\tilde\unit\otimes-\colon\C M\r\C M$ preserves weak equivalences.
 \item The functor $-\otimes\tilde\unit\colon\C M\r\C M$ preserves weak equivalences.
 \item The functor $\tilde\unit\otimes-\colon\C M\r\C M$ preserves and reflects weak equivalences.
 \item The functor $-\otimes\tilde\unit\colon\C M\r\C M$ preserves and reflects  weak equivalences.
\end{enumerate}
\end{lem}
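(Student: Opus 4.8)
The plan is to prove the three biconditionals separately, exploiting throughout the \emph{ordinary} unit axiom that is part of the definition of a monoidal model category. By the previous lemma it holds for our fixed cofibrant resolution $q\colon\tilde\unit\st{\sim}\r\unit$, so that $q\otimes X$ and $X\otimes q$ are weak equivalences whenever $X$ is cofibrant. First I would record that, writing $\lambda$ and $\rho$ for the unit isomorphisms, the composite $\theta_X=\lambda_X\circ(q\otimes X)\colon\tilde\unit\otimes X\r X$ is natural in $X$, as is its right-handed analogue $\vartheta_X=\rho_X\circ(X\otimes q)\colon X\otimes\tilde\unit\r X$; both are weak equivalences for cofibrant $X$. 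The naturality of $\theta$ at a map $f\colon X\r Y$ gives the identity $f\circ\theta_X=\theta_Y\circ(\tilde\unit\otimes f)$, which is the engine of the whole argument.

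For $(2)\Leftrightarrow(4)$ the implication $(4)\Rightarrow(2)$ is immediate, so the work is the reflection $(2)\Rightarrow(4)$: if $\tilde\unit\otimes f$ is a weak equivalence, then so is $f$. The key move is to reduce to maps between cofibrant objects. I would choose cofibrant replacements as trivial fibrations $c_X\colon\tilde X\st{\sim}\onto X$, $c_Y\colon\tilde Y\st{\sim}\onto Y$ and lift $f c_X$ through $c_Y$ to obtain $\tilde f\colon\tilde X\r\tilde Y$ with $c_Y\tilde f=f c_X$, so that $f$ is a weak equivalence iff $\tilde f$ is. Applying $\tilde\unit\otimes-$ to this square and using $(2)$ to see that $\tilde\unit\otimes c_X$ and $\tilde\unit\otimes c_Y$ are weak equivalences, the hypothesis that $\tilde\unit\otimes f$ is a weak equivalence forces $\tilde\unit\otimes\tilde f$ to be one. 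Now in the naturality identity $\tilde f\circ\theta_{\tilde X}=\theta_{\tilde Y}\circ(\tilde\unit\otimes\tilde f)$ the maps $\theta_{\tilde X}$ and $\theta_{\tilde Y}$ are weak equivalences because $\tilde X,\tilde Y$ are cofibrant, and the right-hand composite is a weak equivalence; the 2-out-of-3 axiom then yields that $\tilde f$, and hence $f$, is a weak equivalence. The biconditional $(3)\Leftrightarrow(5)$ is proved verbatim with $-\otimes\tilde\unit$ and $\vartheta$ replacing $\tilde\unit\otimes-$ and $\theta$.

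For $(1)\Leftrightarrow(4)+(5)$ I would argue as follows. Assuming $(1)$, the maps $q\otimes X$ are weak equivalences for \emph{every} $X$, so every $\theta_X$ is a weak equivalence, and the identity $f\circ\theta_X=\theta_Y\circ(\tilde\unit\otimes f)$ shows by 2-out-of-3 that $f$ is a weak equivalence iff $\tilde\unit\otimes f$ is; this is both preservation and reflection, i.e.\ $(4)$, and symmetrically $(5)$. Conversely, assuming $(4)+(5)$ (in particular the preservation statements $(2)$ and $(3)$), I must upgrade the unit axiom from cofibrant objects to all objects. For arbitrary $X$ and a cofibrant replacement $c_X\colon\tilde X\st{\sim}\onto X$, bifunctoriality of $\otimes$ gives a commuting square whose top edge $q\otimes\tilde X$ is a weak equivalence (unit axiom, $\tilde X$ cofibrant), whose right edge $\unit\otimes c_X\cong c_X$ is a weak equivalence, and whose left edge $\tilde\unit\otimes c_X$ is a weak equivalence by $(2)$; the 2-out-of-3 axiom then gives that the bottom edge $q\otimes X$ is a weak equivalence. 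The statement for $X\otimes q$ follows symmetrically from $(3)$, so the very strong unit axiom $(1)$ holds.

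The hard part, and the conceptually delicate point, is the reflection step $(2)\Rightarrow(4)$, together with the fact that $(2)$ is strictly weaker than $(1)$. One cannot hope to show that $\theta_X$ is a weak equivalence for arbitrary $X$ from $(2)$ alone—that is essentially the full very strong unit axiom, which couples the left and right tensorings. The resolution is that reflection for $\tilde\unit\otimes-$ is a property one may test after cofibrant replacement, where the \emph{ordinary} unit axiom already supplies the weak equivalences $q\otimes\tilde X$; hypothesis $(2)$ is then used only to transport the assumption along the replacement. This is precisely why $(2)\Leftrightarrow(4)$ can hold even though neither implies $(1)$ without its right-handed partner, and it is the step where I would be careful to invoke only the unit axiom and never the monoid axiom.
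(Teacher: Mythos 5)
Your proof is correct, and for $(1)\Leftrightarrow(4)+(5)$ and for the direction $(4)+(5)\Rightarrow(1)$ it coincides with the paper's argument: the same naturality identity $f\circ\theta_X=\theta_Y\circ(\tilde\unit\otimes f)$ and the same cofibrant-resolution square. Where you genuinely diverge is $(2)\Rightarrow(4)$: you reduce to a map of cofibrant objects by lifting $fc_X$ through the trivial fibration $c_Y$, and then invoke the naturality square only at cofibrant objects, where the ordinary unit axiom makes $\theta$ a weak equivalence. The paper instead proves the sharper intermediate statement that $(2)$, together with the ordinary unit axiom, already implies that $q\otimes X$ is a weak equivalence for \emph{every} object $X$ --- via exactly the square you use in your $(4)+(5)\Rightarrow(1)$ step --- and then reflection is immediate from the naturality square at arbitrary objects. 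The paper's route is shorter and avoids the lifting machinery entirely; your alternative is sound but redundant, since the intermediate statement you end up proving anyway (in the last part) would have given you $(4)$ at once.

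This points to the one real error in your writeup, which sits in the closing discussion rather than in the proof itself: you assert that one cannot show $\theta_X$ is a weak equivalence for arbitrary $X$ from $(2)$ alone, because ``that is essentially the full very strong unit axiom, which couples the left and right tensorings.'' This is false. The statement ``$\theta_X$ is a weak equivalence for all $X$'' is only the \emph{left} half of the very strong unit axiom (it says nothing about $X\otimes q$), and it \emph{does} follow from $(2)$ plus the ordinary unit axiom: that is precisely what your own square in the $(4)+(5)\Rightarrow(1)$ step establishes, since that square uses only $(2)$, never $(3)$ or $(5)$. This is the paper's key observation. The coupling of the left and right tensorings enters only in assembling $(1)$ from its two halves, not in either biconditional $(2)\Leftrightarrow(4)$ or $(3)\Leftrightarrow(5)$.
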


\begin{proof}
Clearly, $(4)\Rightarrow(2)$ and $(5)\Rightarrow(3)$. If $q\otimes X$ is a weak equivalence for any $X$, $(4)$ follows by applying the the 2-out-of-3 axiom to the following commutative square, 
$$\xymatrix{
\tilde \unit\otimes X\ar[d]_{q\otimes X}^\sim\ar[r]^{\tilde \unit\otimes f}&\tilde \unit\otimes Y\ar[d]^{q\otimes Y}_\sim\\
X\ar[r]^f&Y
}$$
Tensoring in the reverse order, we see that if $X\otimes q$ is a weak equivalence for any $X$ then $(5)$ holds. In particular $(1)\Rightarrow (4)+(5)$.

Assuming $(2)$, and given an object $X$ in $\C M$ with a cofibrant resolution $q'\colon\tilde X\st{\sim}\r X$, $q\otimes X$ is a weak equivalence by  the 2-out-of-3 axiom applied to the following commutative diagram
$$\xymatrix{
\tilde \unit\otimes \tilde X\ar[d]_{q\otimes \tilde X}^\sim\ar[r]_\sim^{\tilde \unit\otimes q'}&\tilde \unit\otimes X\ar[d]^{q\otimes X}\\
\tilde X\ar[r]^{q'}_\sim&X
}$$
Here $q\otimes\tilde X$ is a weak equivalence by the unit axiom and $\tilde\unit\otimes q'$ is a weak equivalence by (2). Tensoring in the reverse order, we check that $(3)$ implies that $X\otimes q$ is a weak equivalence. This completes the proof.
\end{proof}

\begin{cor}\label{bueno}
If tensoring with a cofibrant object, from the left or from the right, preserves weak equivalences in $\C M$ then the very strong unit axiom holds in $\C M$ for any cofibrant resolution of the tensor unit.
\end{cor}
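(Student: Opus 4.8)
The plan is to deduce this immediately from Lemma \ref{white}. The key observation is that the source of any cofibrant resolution is, by definition, a cofibrant object, so the hypothesis of the corollary applies to it verbatim.

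First I would fix an arbitrary cofibrant resolution $q\colon\tilde\unit\st{\sim}\r\unit$ of the tensor unit. Since $\tilde\unit$ is cofibrant, the assumption that tensoring with a cofibrant object from the left preserves weak equivalences says, when specialized to $\tilde\unit$, exactly that $\tilde\unit\otimes-$ preserves weak equivalences; this is condition $(2)$ of Lemma \ref{white}. Symmetrically, the assumption about tensoring from the right, applied to $\tilde\unit$, yields that $-\otimes\tilde\unit$ preserves weak equivalences, which is condition $(3)$.

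Finally, I would invoke the chain of equivalences established in Lemma \ref{white}: since $(2)\Leftrightarrow(4)$, $(3)\Leftrightarrow(5)$, and $(1)\Leftrightarrow(4)+(5)$, having both $(2)$ and $(3)$ forces condition $(1)$, namely the very strong unit axiom for $q$. Because this argument uses nothing about $q$ beyond the cofibrancy of $\tilde\unit$, it applies uniformly to every cofibrant resolution, which is the stated conclusion. There is no genuine obstacle here; all the substance lives in Lemma \ref{white}, and the corollary merely records that the natural and easily verified hypothesis ``tensoring with a cofibrant object preserves weak equivalences'' already supplies conditions $(2)$ and $(3)$ for each resolution at once.
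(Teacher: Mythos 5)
Your proposal is correct and matches the paper's intended argument exactly: the paper offers no separate proof for Corollary \ref{bueno}, presenting it as an immediate consequence of Lemma \ref{white}, and your deduction (cofibrancy of $\tilde\unit$ gives conditions $(2)$ and $(3)$, hence $(4)+(5)$, hence $(1)$, for every cofibrant resolution at once) is precisely the intended reasoning.
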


The following lemma is needed in order to check the left properness statement.

\begin{lem}\label{10}
With the notation in Theorem \ref{2}, a relative $\tilde I$-cell complex $X\r Y$ is the same as a composite $X\r X\amalg \unit^{(S)}\r Y$
where $\unit^{(S)}$ is a coproduct of copies of $\unit$ indexed by a set $S$, the first arrow is the inclusion of the first factor, and the second arrow is a relative $I$-cell complex
\end{lem}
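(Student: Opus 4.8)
The plan is to establish the two implications concealed in the phrase ``is the same as''. The easy direction is that any composite of the stated form is a relative $\tilde I$-cell complex: after well-ordering $S$, the inclusion $X\r X\amalg\unit^{(S)}$ is the transfinite composition of pushouts of $\varnothing\r\unit$ along the unique maps out of the initial object, hence a relative $\tilde I$-cell complex built from the single extra generator; a relative $I$-cell complex is automatically a relative $\tilde I$-cell complex since $I\subseteq\tilde I$; and the composite of two relative $\tilde I$-cell complexes is again one.

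The substance is the converse. First I would record a sub-lemma: for any object $Z$, applying $-\amalg Z$ to a single pushout of a map $i\colon A\r B$ of $I$ produces again a pushout of $i$. Concretely, if $W'=W\cup_A B$ is the pushout along $g\colon A\r W$, a pasting-of-pushouts argument identifies $(W\amalg Z)\cup_A B$, with attaching map $A\st{g}\r W\hookrightarrow W\amalg Z$, with $W'\amalg Z$, using that gluing $W'$ to $W\amalg Z$ along the coproduct inclusion $W\hookrightarrow W\amalg Z$ merely re-adds the free summand $Z$. Thus $-\amalg Z$ carries an $I$-cell attachment to an $I$-cell attachment, with the attaching map only postcomposed with a coproduct inclusion.

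Then I would reindex the given complex. Write it as a tower $X=X_0\r\cdots\r X_\lambda=Y$, let $S\subseteq\lambda$ be the set of stages where the cell $\varnothing\r\unit$ is attached (so $X_{\alpha+1}=X_\alpha\amalg\unit$ there), and let $S_\alpha$ be the subset attached before stage $\alpha$. Define $\bar X_\alpha=X_\alpha\amalg\unit^{(S\setminus S_\alpha)}$, attaching in advance all the $\unit$-cells still to come, so that $\bar X_0=X\amalg\unit^{(S)}$ and $\bar X_\lambda=Y$ (as $S\setminus S_\lambda=\varnothing$). At a $\unit$-stage the map $\bar X_\alpha\r\bar X_{\alpha+1}$ is the identity, since the newly added copy of $\unit$ was already present in the prefabricated part; at an $I$-stage, applying the sub-lemma with $Z=\unit^{(S\setminus S_\alpha)}$ to the defining pushout shows $\bar X_\alpha\r\bar X_{\alpha+1}$ is a pushout of the same generator of $I$. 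Discarding the identity maps, the tower $(\bar X_\alpha)$ exhibits $X\amalg\unit^{(S)}\r Y$ as a relative $I$-cell complex, and precomposition with the coproduct inclusion recovers $X\r Y$.

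The step needing care is the passage to limit ordinals, where one must check $\bar X_\alpha=\colim_{\beta<\alpha}\bar X_\beta$. The clean way is to use the canonical maps $\unit^{(S_\beta)}\r X_\beta$ coming from the attached $\unit$-cells to rewrite $\bar X_\alpha=X_\alpha\cup_{\unit^{(S_\alpha)}}\unit^{(S)}$, the pushout collapsing the coproduct inclusion $\unit^{(S_\alpha)}\hookrightarrow\unit^{(S)}$ back onto $X_\alpha$. Since colimits commute with pushouts and coproducts, and $X_\alpha=\colim_{\beta<\alpha}X_\beta$, $\unit^{(S_\alpha)}=\colim_{\beta<\alpha}\unit^{(S_\beta)}$, while $\unit^{(S)}$ is constant over the connected indexing ordinal, the desired identification follows. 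This bookkeeping is the only real obstacle; everything else is formal manipulation of pushouts and coproducts.
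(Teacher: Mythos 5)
Your proof is correct, and it is essentially the paper's own argument written out in full: the paper's proof consists of the single observation that in the construction of a relative $\tilde I$-cell complex one can move all occurrences of $\varnothing\r\unit$ to the beginning, and your reindexed tower $\bar X_\alpha=X_\alpha\amalg\unit^{(S\setminus S_\alpha)}$ (with the pushout description $X_\alpha\cup_{\unit^{(S_\alpha)}}\unit^{(S)}$ handling limit ordinals) is exactly the rigorous implementation of that reordering. The paper leaves all of these details implicit, so your write-up adds nothing different in substance, only in care.
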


This follows from the fact that, in the construction of a relative $\tilde I$-cell complex, we can move all occurrences of $\varnothing\r\unit$ to the beginning.

We can now tackle the proof of Theorem \ref{2}.

\begin{proof}[Proof of Theorem \ref{2}]
We use the characterization of cofibrantly generated model categories in \cite[Theorem 2.1.19]{hmc}. We must check that $\tilde{\C M}$ satisfies six conditions which are satisfied by $\C M$. Condition 1, about weak equivalences, holds since $\tilde{\C M}$ has the same weak equivalences as $\C M$.  Conditions 2 and 3 are part of the assumptions. 

Let us check that relative $\tilde J$-cell complexes are weak equivalences as well as $\tilde I$-cofibrations (4). Maps in $J$ are $I$-cofibrations, $i_1\colon \tilde\unit\r\tilde\unit\amalg\unit$ is a push-out of $\varnothing\r \unit$, which is in $\tilde I$, and  $j\colon\tilde\unit\amalg\unit\r C$ is an $I$-cofibration. Hence all maps in $\tilde J$, and more generally all relative $\tilde J$-cell complexes, are $\tilde I$-cofibrations. We must also show that any relative $\tilde J$-cell complex $f\colon X\r Y$ is a weak equivalence. By the very strong unit axiom and Lemma \ref{white}, it is enough to show that $\tilde\unit \otimes f$ is a weak equivalence. Notice that $\tilde\unit \otimes f$ is a relative $(\tilde \unit\otimes \tilde J)$-cell complex. Hence it is enough to prove that maps in $\tilde \unit\otimes \tilde J=\tilde \unit\otimes J\cup\{\tilde \unit\otimes ji_1\}$ are $J$-cofibrations, or equivalently weak equivalences and $I$-cofibrations. The functor $\tilde\unit\otimes-\colon\C M\r\C M$  
preserves 
$I$- and $J$-cofibrations by 
the push-out product axiom, since $\tilde \unit$ is cofibrant. Hence $\tilde \unit\otimes J$ consists of $J$-cofibrations. Moreover, for the same reason $\tilde\unit\otimes j$ is an $I$-cofibration. The map $\tilde \unit\otimes i_1\colon \tilde \unit\otimes\tilde\unit\r\tilde\unit\otimes\tilde \unit\amalg\tilde \unit\otimes\unit$ is also an $I$-cofibration since $\tilde \unit\otimes\unit\cong \tilde \unit$ is cofibrant in $\C M$. Finally, since $ji_1$ is a weak equivalence, $\tilde\unit\otimes (ji_1)$ too, by the very strong unit axiom and Lemma \ref{white}.

Let us check that $\tilde I$-injective maps are $\tilde J$-injective weak equivalences (5). Any $\tilde I$-injective map is also $I$-injective, since $I\subset\tilde I$, so it is a $J$-injective weak equivalence. It remains to show that any $\tilde I$-injective map $f\colon X\r Y$ satisfies the right lifting property with respect to $ji_1$, i.e.~that we can find a lifting for any solid commutative square as follows
$$\xymatrix@C=40pt@R=10pt{
\tilde\unit\ar[r]^g\ar[d]_{i_1}&X\ar[dd]^f\\
\tilde\unit\amalg\unit\ar[d]_j\ar@{-->}[ru]^<(.2){(g,h')}&\\
C\ar[r]_h\ar@{-->}[ruu]_l&Y
}$$
Let $i_2\colon\unit\r\tilde\unit\amalg\unit$ be the inclusion of the second factor. Since $f$ is $\tilde I$-injective we can lift $hji_2\colon\unit \r Y$ along $f$. Denote a lifting by $h'\colon\unit\r X$. The upper dashed arrow in the previous diagram subdivides it into a commutative triangle (above) and a commutative square (below). This commutative square has a lifting $l$ since $j$ is an $I$-cofibration  $f$ is $I$-injective. This map $l$ is also a lifting of the solid diagram.

Let us prove that $\tilde J$-injective weak equivalences are $\tilde I$-injective (6). Any  $\tilde J$-injective weak equivalence $f\colon X\r Y$ is $J$-injective, and hence $I$-injective. We must prove that $f$ satisfies the right lifting property with respect to $\varnothing\r \unit$, i.e.~that we can find a lifting for any solid square as follows,
$$\xymatrix@C=50pt@R=10pt{
\varnothing\ar[r]^g\ar[d]&X\ar[ddd]^f\\
\tilde\unit\ar[d]_{ji_1}\ar@{-->}[ru]^<(.2){h'}&\\
C\ar[d]_p\ar@{-->}[ruu]^<(.2){h''}\\
\unit\ar[r]_h\ar@{-->}[ruuu]_{l}&Y
}$$
Since $\tilde \unit$ is cofibrant in $\C M$, we can find a dashed arrow $h'\colon\tilde\unit\r X$ subdividing the diagram in two commutative parts. Moreover, since $ji_1\in \tilde J$, there exists a map $h''\colon C\r X$ which further subdivides the bottom part of the diagram. Now the composite $$l\colon \unit\st{i_2}\To\tilde\unit\amalg\unit\st{j}\To C\st{h''}\To X$$
is the desired lifting.

In the previous paragraphs we have constructed the model structure $\tilde{\C M}$. We now check that it satisfies the push-out product axiom, so it is a monoidal model category with cofibrant tensor unit. More precisely, we must check that the push-out product $f\odot g$ is an $\tilde I$-cofibration if $f,g\in \tilde I$, or a $\tilde J$-cofibration if $f\in \tilde I$ and $g\in \tilde J$ or if $f\in \tilde J$ and $g\in \tilde I$. Since $\C M$ is a monoidal model category, we can skip the cases $f,g\in I$, $f\in I$  and $g\in J$, and $f\in J$ and $g\in I$.  If $f$ is $\varnothing\r \unit$ then $f\odot g=g$ and everything is trivial. Similarly if $g$ is $\varnothing\r \unit$. If $f=ji_1$ and $g\colon U\r V$ is in $I$, the push-out product $(ji_1)\odot g$ is the composite
$$\xymatrix{
\tilde\unit \otimes V\bigcup_{\tilde\unit \otimes U}C\otimes U\ar[rrr]^-{i_1\odot g\bigcup_{(\tilde\unit\amalg\unit)\otimes U}C\otimes U}&&&
(\tilde\unit\amalg\unit)\otimes V\bigcup_{(\tilde\unit\amalg\unit)\otimes U}C\otimes U\ar[r]^-{j\odot g}& C\otimes V}$$
and $i_1\odot g$ is 
$$i_1\odot g=\id{\tilde\unit\otimes V}\amalg\unit\otimes g\colon\tilde\unit\otimes V\amalg\unit\otimes U\To \tilde\unit\otimes V\amalg\unit\otimes V.$$
The map $i_1\odot g$ is a coproduct of $I$-cofibrations since $\unit\otimes g\cong g$, so $i_1\odot g\bigcup_{(\tilde\unit\amalg\unit)\otimes U}C\otimes U$ is an $I$-cofibration. Moreover,  $j\odot g$ is an $I$-cofibration by the push-out product axiom in $\C M$. Therefore $(ji_1)\odot g$ is an $I$-cofibration. We have already seen above that $\tilde \unit\otimes (ji_{1})$ is a $J$-cofibration. 
% The map $ji_{1}$ is also a weak equivalence, hence $\tilde \unit\otimes (ji_{1})$ too by Lemma \ref{white}. Moreover, $\tilde \unit\otimes j$ is an $I$-cofibration by the push-out product axion in $\C M$, and $\tilde \unit\otimes i_{1}\colon \tilde \unit\otimes \tilde \unit\r \tilde \unit\otimes \tilde \unit\amalg \tilde \unit\otimes\unit$ is an $I$-cofibration since $\tilde \unit\otimes  \unit\cong \tilde \unit$ is cofibrant in $\C M$. Hence $\tilde \unit\otimes (ji_{1})$ is an $I$-cofibration, and therefore a $J$-cofibration, since we have already seen that it is a weak equivalence. 
By the push-out product axiom in $\C M$, $(\tilde 
\unit\otimes (ji_{1}))\odot g=\tilde \unit\otimes ((ji_{1})\odot g)$ is also a $J$-cofibration, in particular a weak equivalence. Hence $(ji_{1})\odot g$ is a weak equivalence by Lemma \ref{white}, so it is  a $J$-cofibration, since we have already seen that it is an $I$-cofibration. In particular $(ji_{1})\odot g$ is a $\tilde J$-cofibration. If $g=ji_{1}$ and $f\in I$ the proof is similar.

The statement about right properness is obvious since $\tilde{\C M}$ has less fibrations than $\C M$. Suppose that $\C M$ is left proper. By Lemma \ref{10}, in order to check that $\tilde{\C M}$ is also left proper it is enough to prove that, for any weak equivalence $f$ and any set $S$, $f\amalg \unit^{(S)}$ is a weak equivalence. By Lemma \ref{white} it suffices to prove that $\tilde\unit\otimes f \amalg \tilde \unit^{(S)}$ is a weak equivalence, and this follows since $\tilde\unit\otimes f$ is a weak equivalence (again by Lemma \ref{white}), $\tilde \unit^{(S)}$ is cofibrant in $\C M$, and $\C M$ is left proper.

For the final part of the statement, we must check that any relative $(\tilde J\otimes\operatorname{Ob}\C C)$-cell complex $f$ is a weak equivalence. By Lemma \ref{white}, it suffices to show that $\tilde\unit\otimes f$ is a weak equivalence. The map $\tilde\unit\otimes f$ is a relative $(\tilde\unit\otimes \tilde J\otimes\operatorname{Ob}\C C)$-cell complex. We have seen above that $\tilde\unit\otimes \tilde J$ consists of $J$-cofibrations. Hence $\tilde\unit\otimes f$ is a relative $((\text{$J$-cofibrations})\otimes\operatorname{Ob}\C C)$-cell complex, so it is a weak equivalence by the monoid axiom in $\C M$.

%Similarly if $f\in J$ and $g=ji_1$. Finally, if $f=g=ji_2$ we can similarly decompose 
%\begin{align*}
%(ji_1)\odot(ji_1)&=(j\odot (ji_1))\left(i_1\odot(ji_1)\bigcup_{(\tilde\unit\amalg \unit)^{\otimes 2}}C\otimes(\tilde\unit\amalg \unit)\right),\\
%i_1\odot(ji_1)&=\id{\tilde\unit\otimes C}\amalg\unit\otimes ji_1\cong \id{\tilde\unit\otimes C}\amalg ji_1,\\
%j\odot (ji_1)&=(j\odot j)\left(
%(\tilde\unit\amalg\unit)\otimes C\bigcup_{(\tilde\unit\amalg \unit)^{\otimes 2}}j\odot i_1
%\right),\\
%j\odot i_1&= j\otimes\unit\amalg\id{C\otimes\tilde\unit}\cong j\amalg\id{C\otimes\tilde\unit}.
%\end{align*}
%Starting from the bottom, $j\odot i_1$ is a cofibration in $\C M$ since it is a coproduct of cofibrations in $\C M$, hence the second factor of $j\odot (ji_1)$ is a cofibration in $\C M$.  The first factor is $j\odot j$, which is a cofibration in $\C M$ by the push-out product axiom. We conclude that $j\odot (ji_1)$ is a a cofibration in $\C M$.
\end{proof}

We now characterize cofibrant objects in $\tilde{\C M}$. We say that an object in a monoidal model category is \emph{cofibrant mod $\unit$} if it is a retract of an object $X$ fitting in a cofibration $\unit^{(S)}\into X$. This terminology is justified because cofibrant objects are cofibrant mod $\unit$ (the set $S$ may be empty), and the converse holds if and only if $\unit$ is cofibrant. The $\unit$-cofibrant objects of \cite{htnso2} are cofibrant mod $\unit$. The following result is an immediate consequence of Lemma \ref{10}.

\begin{cor}
In the conditions of Theorem \ref{2}, an object is cofibrant mod $\unit$ in $\C M$ if and only if it is cofibrant in $\tilde{\C M}$.
\end{cor}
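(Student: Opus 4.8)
The plan is to read off both implications from the standard characterization of cofibrations in a cofibrantly generated model category as retracts of relative cell complexes, combined with Lemma~\ref{10}. Recall that an object $X$ is cofibrant in $\tilde{\C M}$ exactly when $\varnothing\r X$ is an $\tilde I$-cofibration, i.e.\ a retract of a relative $\tilde I$-cell complex $\varnothing\r Z$; since the initial object is fixed in any retract diagram, this amounts to saying that $X$ is a retract of such a $Z$.

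For the implication \emph{cofibrant in $\tilde{\C M}$ $\Rightarrow$ cofibrant mod $\unit$}, I would take $X$ to be a retract of an object $Z$ with $\varnothing\r Z$ a relative $\tilde I$-cell complex and apply Lemma~\ref{10} with source $\varnothing$. It factors $\varnothing\r Z$ as $\varnothing\r\unit^{(S)}\r Z$, where the second map is a relative $I$-cell complex, hence a cofibration in $\C M$. Thus $Z$ fits in a cofibration $\unit^{(S)}\into Z$, and $X$, being a retract of $Z$, is cofibrant mod $\unit$ by definition.

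For the converse, \emph{cofibrant mod $\unit$ $\Rightarrow$ cofibrant in $\tilde{\C M}$}, I would start from a retract $X$ of an object $Z$ equipped with a cofibration $\unit^{(S)}\into Z$ in $\C M$. The map $\varnothing\r\unit^{(S)}$ is a coproduct of copies of $\varnothing\r\unit\in\tilde I$, hence an $\tilde I$-cofibration, while $\unit^{(S)}\into Z$, being an $I$-cofibration and $I\subseteq\tilde I$, is also an $\tilde I$-cofibration. Their composite $\varnothing\r Z$ is therefore an $\tilde I$-cofibration, so $Z$ is cofibrant in $\tilde{\C M}$, and hence so is its retract $X$.

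The only point requiring care — and the reason the two notions agree on the nose rather than up to some correction — is the interplay between the two a priori distinct uses of \emph{retract}: the definition of cofibrant mod $\unit$ permits an arbitrary $\C M$-cofibration $\unit^{(S)}\into Z$, whereas Lemma~\ref{10} produces an honest relative $I$-cell complex. The forward direction delivers the stronger cell-complex description for free, and the backward direction needs only the inclusion of $I$-cofibrations into $\tilde I$-cofibrations to promote the given $\C M$-cofibration. Consequently no genuine obstacle arises, and the corollary is indeed immediate from Lemma~\ref{10}.
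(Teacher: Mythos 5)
Your proposal is correct and takes essentially the same route as the paper, which states the corollary is an immediate consequence of Lemma~\ref{10}: your write-up simply makes explicit the retract characterization of $\tilde I$-cofibrations and the inclusion of $I$-cofibrations into $\tilde I$-cofibrations that the paper leaves implicit. No gaps.
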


Let us consider the functorial properties of $\tilde{\C M}$. A Quillen adjunction between monoidal model categories $F\colon \C M\rightleftarrows\C N\colon G$ is \emph{weak monoidal} \cite[Definition 3.6]{emmc} if $F$ is colax monoidal, the comultiplication of $F$, 
$$F(X\otimes Y)\To F(X)\otimes F(Y),$$
is a weak equivalence when $X$ and $Y$ are cofibrant, and for some (and hence any) cofibrant resolution $q\colon\tilde\unit\st{\sim}\r\unit$ of the tensor unit in $\C M$, the composite
\begin{equation}\label{cou}
F(\tilde\unit)\st{F(q)}\To F(\unit)\To\unit,
\end{equation}
where the second map is the counit of $F$, is a weak equivalence. This generalizes Hovey's (strong) monoidal Quillen adjunctions \cite[Definition 4.2.16]{hmc}, where the comultiplication and the counit are required to be always isomorphisms, hence the remaining condition is that $F(q)$ be a weak equivalence.

\begin{prop}
Let $F\colon \C M\rightleftarrows\C N\colon G$ be a weak monoidal Quillen adjunction such that $\C M$ satisfies the assumptions of Theorem \ref{2}, $F(\unit)$ is cofibrant, and the counit $F(\unit)\r\unit$ is a weak equivalence. Then $F\colon \tilde{\C M}\rightleftarrows\C N\colon G$ is also a weak monoidal Quillen adjunction. The same holds if we replace `adjunction' with `equivalence'.
\end{prop}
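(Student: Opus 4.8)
The plan is to establish three things in order: that $F\colon\tilde{\C M}\rightleftarrows\C N\colon G$ is still a Quillen adjunction, that the two extra clauses in the definition of a weak monoidal Quillen adjunction survive, and that the `equivalence' version follows formally. For the first I would use the standard criterion that a left adjoint out of a cofibrantly generated model category is left Quillen as soon as it sends the generating cofibrations to cofibrations and the generating trivial cofibrations to trivial cofibrations. Since $F$ is already left Quillen on $\C M$, the only new generating cofibration to handle is $\varnothing\r\unit$, and $F(\varnothing\r\unit)=(\varnothing\r F(\unit))$ is a cofibration exactly because $F(\unit)$ is assumed cofibrant. For the generating trivial cofibrations the only new map is $ji_1\colon\tilde\unit\r C$; writing $ji_1=j\circ i_1$ and noting that $F(i_1)$ is the cobase change of $\varnothing\r F(\unit)$ while $F(j)$ is the image of a cofibration of $\C M$, its image $F(ji_1)$ is at least a cofibration.

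The hard part, and the only place the two hypotheses are genuinely consumed, is to show that $F(ji_1)$ is a weak equivalence. First I would observe that $F(q)$ is a weak equivalence: the composite $F(\tilde\unit)\st{F(q)}\r F(\unit)\r\unit$ is a weak equivalence because $(F,G)$ is weakly monoidal on $\C M$, while its second factor, the counit $F(\unit)\r\unit$, is a weak equivalence by assumption, so two-out-of-three applies. Next, the map $ji_2\colon\unit\r C$, where $i_2$ is the inclusion of the second summand, is a cofibration (it is $j$ precomposed with the cobase change $i_2$ of $\varnothing\r\tilde\unit$) and a weak equivalence (since $p\circ(ji_2)=\id{\unit}$ with $p$ a weak equivalence), hence a trivial cofibration of $\C M$; therefore $F(ji_2)$ is a weak equivalence. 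From $F(p)\circ F(ji_2)=\id{F(\unit)}$ I conclude that $F(p)$ is a weak equivalence, and then from $F(p)\circ F(ji_1)=F(q)$ that $F(ji_1)$ is a weak equivalence. This completes the verification that $F\colon\tilde{\C M}\rightleftarrows\C N$ is a Quillen adjunction.

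Next I would check the two monoidal clauses. That $F$ is colax monoidal is automatic, the comultiplication and counit being independent of the model structure, and the unit clause is immediate because $\unit$ is cofibrant in $\tilde{\C M}$: taking $\id{\unit}$ as its cofibrant resolution, the required composite is just the counit $F(\unit)\r\unit$, a weak equivalence by hypothesis. The substantive clause is that the comultiplication $F(X\otimes Y)\r F(X)\otimes F(Y)$ be a weak equivalence for all $X,Y$ cofibrant in $\tilde{\C M}$, a larger class than the cofibrant objects of $\C M$. I would reduce to the known case by picking $\C M$-cofibrant resolutions $X^c\st{\sim}\r X$, $Y^c\st{\sim}\r Y$ and running the naturality square of the comultiplication at $(X^c,Y^c)$ and at $(X,Y)$: the top arrow is a weak equivalence by the hypothesis on $\C M$, and I must argue that both vertical arrows $F(X^c\otimes Y^c)\r F(X\otimes Y)$ and $F(X^c)\otimes F(Y^c)\r F(X)\otimes F(Y)$ are weak equivalences. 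Indeed all objects in sight are cofibrant in $\tilde{\C M}$ (their $F$-images cofibrant in $\C N$), so by the pushout--product axiom and Ken Brown's lemma in the monoidal model categories $\tilde{\C M}$ and $\C N$ the maps $X^c\otimes Y^c\r X\otimes Y$ and $F(X^c)\otimes F(Y^c)\r F(X)\otimes F(Y)$ are weak equivalences between cofibrant objects, the former preserved by the now left Quillen functor $F$. Two-out-of-three then gives the comultiplication at $(X,Y)$.

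For the final sentence I would use two-out-of-three for Quillen equivalences. The identity is a Quillen equivalence $\C M\rightleftarrows\tilde{\C M}$, and $F\colon\C M\rightleftarrows\C N$ is one by hypothesis; since every $\C M$-cofibrant object is already $\tilde{\C M}$-cofibrant, the total left derived functor of $F\colon\tilde{\C M}\r\C N$ composed with that of $\id\colon\C M\r\tilde{\C M}$ is the total left derived functor of $F\colon\C M\r\C N$. The outer two being equivalences of homotopy categories forces the derived functor of $F\colon\tilde{\C M}\r\C N$ to be an equivalence too, so $F\colon\tilde{\C M}\rightleftarrows\C N$ is a weak monoidal Quillen equivalence.
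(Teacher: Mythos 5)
Your proposal is correct. The core of it—showing $F$ sends $\tilde I$ to cofibrations and $\tilde J$ to trivial cofibrations, with the only delicate point being that $F(ji_1)$ is a weak equivalence via $pji_1=q$, the trivial cofibration $ji_2$, and the retraction $F(p)F(ji_2)=\id{F(\unit)}$—is essentially the paper's argument, only reorganized: you first isolate $F(q)$ as a weak equivalence by two-out-of-three against the counit, whereas the paper works directly with the composite $F(p)F(ji_1)$ followed by the counit; these are the same computation. Your verification of the comultiplication clause (resolving $X$, $Y$ by $\C M$-cofibrant objects and running the naturality square, using the push-out product axiom and Ken Brown's lemma in $\tilde{\C M}$ and $\C N$) is also exactly the paper's proof; you additionally spell out the unit clause \eqref{cou} in $\tilde{\C M}$, which the paper leaves implicit and which is indeed immediate since $\unit$ is $\tilde{\C M}$-cofibrant. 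Where you genuinely diverge is the `equivalence' statement: the paper checks the Quillen-equivalence condition by hand, verifying for $X$ cofibrant mod $\unit$ and $Y$ fibrant that $F(X)\r Y$ is a weak equivalence iff its adjoint is, by precomposing with a cofibrant resolution $q_X$ and using that $F(q_X)$ is a weak equivalence; you instead argue formally, composing total left derived functors along $\C M\r\tilde{\C M}\r\C N$ and invoking two-out-of-three for equivalences of homotopy categories, using that the identity $\C M\rightleftarrows\tilde{\C M}$ is a Quillen equivalence (which the paper records and which is immediate since the two structures share weak equivalences). Both routes are valid; yours is shorter and more categorical, while the paper's is self-contained at the level of individual maps and does not need the derived-functor composition theorem.
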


\begin{proof}
By assumption, the maps in $F(I)$ and $F(J)$ are cofibrations and trivial cofibrations in $\C N$, respectively. The map $F(\varnothing\r\unit)=(\varnothing\r F(\unit))$ is assumed to be a cofibration. Hence $F\colon\tilde{\C M}\r\C N$ preserves cofibrations, in particular $F(ji_1)$ is a cofibration. Let us check that it is actually a trivial cofibration. The composite
$$F(\tilde\unit)\st{F(ji_1)}\To F(C)\st{F(p)}\To F(\unit)\st{\sim}\To \unit$$
is the weak equivalence \eqref{cou} since $pji_1=(q,\id{\unit})i_1=q\colon\tilde\unit\r\unit$. Hence, by the 2-out-of-3 axiom, $F(ji_1)$ is a weak equivalence if and only if $F(p)$ is a weak equivalence. In $\C M$, the inclusion of the second factor $i_2\colon\unit\r\tilde\unit\amalg\unit$ is a cofibration since $\tilde\unit$ is cofibrant, and moreover $ji_1$ is a trivial cofibration since $j$ is a cofibration, $p$ is a weak equivalence, and $pji_2=(q,\id{\unit})i_2=\id{\unit}$. Therefore $F(ji_2)$ is a trivial cofibration and $F(p)$ is a weak equivalence by the 2-out-of-3 axiom applied to $\id{\unit}=F(\id{\unit})=F(p)F(ji_2)$.
We conclude that $F\colon\tilde{\C M}\r\C N$  is a left Quillen functor.

We now check the weak monoidal part, i.e.~that the comultiplication is a weak equivalence when evaluated at objects $X$ and $Y$ which are cofibrant  mod $\unit$. Take cofibrant resolutions $q_X\colon \tilde X\st{\sim}\r X$ and $q_Y\colon \tilde Y\st{\sim}\r Y$ in $\C M$. These maps are weak equivalences between cofibrant objects in $\tilde{\C M}$. By the push-out product axiom in $\tilde{\C M}$, $q_X\otimes q_Y$ is also a weak equivalence between cofibrant objects in $\tilde{\C M}$. By Ken Brown's lemma, $F(q_X)$, $F(q_Y)$ and $F(q_X\otimes q_Y)$ are also weak equivalences between cofibrant objects in $\C N$. By the push-out product axiom in $\C N$, $F(q_X)\otimes F(q_Y)$ is a weak 
equivalence between cofibrant objects in $\C N$ too. Hence the comultiplication $F(X\otimes Y)\To F(X)\otimes F(Y)$ is a weak equivalence by the 2-out-of-3 axiom applied to the following commutative square, 
$$\xymatrix{
F(\tilde X\otimes\tilde Y)\ar[r]^-{\text{comult.}}_-\sim\ar[d]_{F(q_X\otimes q_Y)}^\sim
&
F(\tilde X)\otimes F(\tilde Y)\ar[d]^{F(q_X)\otimes F(q_Y)}_\sim\\
F(X\otimes Y)\ar[r]_-{\text{comult.}}
&
F(X)\otimes F(Y)
}$$

For the final statement, we have to check that, if $X$ is cofibrant mod $\unit$ in $\C M$ and $Y$ is fibrant in $\C N$,  $F(X)\r Y$ is a weak equivalence if and only if the adjoint map $X\r G(Y)$ is a weak equivalence. We are assuming that this is true if $X$  is cofibrant in $\C M$. Let $X$ be cofibrant mod $\unit$ and let $q_X\colon\tilde X\st{\sim}\r X$ be a cofibrant resolution in $\C M$. The adjoint of $$F(\tilde X)\mathop{\To}^{F(q_X)}_\sim F(X)\To Y$$
is $$\tilde X\mathop{\To}^{q_X}_\sim X\To G(Y).$$
We have seen above that $F(q_X)$ is a weak equivalence, hence the claim is a consequence of the 2-out-of-3 axiom.
\end{proof}

\begin{cor}
Let $F\colon \C M\rightleftarrows\C N\colon G$ be a weak monoidal Quillen adjunction such that $\C M$ and $\C N$ satisfy the assumptions of Theorem \ref{2}, $F(\unit)$ is cofibrant mod $\unit$, and the counit $F(\unit)\r\unit$ is a weak equivalence. Then $F\colon \tilde{\C M}\rightleftarrows\tilde{\C N}\colon G$ is also a weak monoidal Quillen adjunction. The same holds if we replace `adjunction' with `equivalence'.
\end{cor}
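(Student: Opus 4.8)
The plan is to deduce this corollary from the preceding proposition by replacing the target $\C N$ with $\tilde{\C N}$, which is legitimate because $\C N$ satisfies the assumptions of Theorem \ref{2} by hypothesis. The crucial observation is the characterization of cofibrant objects established above (the corollary following Lemma \ref{10}): an object is cofibrant mod $\unit$ in $\C N$ if and only if it is cofibrant in $\tilde{\C N}$. Hence the hypothesis that $F(\unit)$ be cofibrant mod $\unit$ in $\C N$ says precisely that $F(\unit)$ is cofibrant in $\tilde{\C N}$, which is exactly the condition the proposition imposes on its target.

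First I would verify that $F\colon\C M\rightleftarrows\tilde{\C N}\colon G$ is again a weak monoidal Quillen adjunction. Since $\tilde{\C N}$ and $\C N$ share the same underlying category and the same weak equivalences, and every cofibration of $\C N$ is a cofibration of $\tilde{\C N}$ (equivalently, the identity is a left Quillen functor $\C N\r\tilde{\C N}$), the functor $F$ continues to send cofibrations and trivial cofibrations of $\C M$ to cofibrations and trivial cofibrations of $\tilde{\C N}$; thus $F$ remains left Quillen. The colax monoidal structure on $F$ is unchanged, and both the comultiplication condition and the counit condition of weak monoidality refer only to weak equivalences in the target, which agree in $\tilde{\C N}$ and $\C N$, so they persist verbatim. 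Equivalently, one may phrase this step as composing the given weak monoidal Quillen adjunction with the monoidal Quillen equivalence $\C N\rightleftarrows\tilde{\C N}$ supplied by the remark after Theorem \ref{2}.

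With this in hand, the previous proposition applies with $\C N$ replaced by $\tilde{\C N}$: its hypotheses are that $F\colon\C M\rightleftarrows\tilde{\C N}\colon G$ be weak monoidal Quillen (just verified), that $\C M$ satisfy the assumptions of Theorem \ref{2} (assumed), that $F(\unit)$ be cofibrant in the target $\tilde{\C N}$ (the reformulated hypothesis), and that the counit $F(\unit)\r\unit$ be a weak equivalence (assumed, and unaffected since weak equivalences coincide). The proposition then yields directly that $F\colon\tilde{\C M}\rightleftarrows\tilde{\C N}\colon G$ is a weak monoidal Quillen adjunction. For the equivalence statement I would note that the identity $\C N\rightleftarrows\tilde{\C N}$ is itself a Quillen equivalence, so whenever the original $F\colon\C M\rightleftarrows\C N\colon G$ is a Quillen equivalence its composite $F\colon\C M\rightleftarrows\tilde{\C N}\colon G$ is too; the equivalence half of the proposition then upgrades the conclusion to a weak monoidal Quillen equivalence.

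The step I expect to demand the most care is the first one, namely confirming that weak monoidality genuinely transfers to the adjunction landing in $\tilde{\C N}$. The left Quillen property and the comultiplication and counit conditions all rest on the single fact that $\tilde{\C N}$ has more cofibrations but the same weak equivalences as $\C N$; once this comparison of the two model structures on $\C N$ is made explicit the remaining verifications are formal, but it is precisely the point at which one must avoid conflating cofibrations, fibrations, and weak equivalences across the two structures.
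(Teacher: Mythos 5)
Your proposal is correct and follows essentially the same route as the paper: the paper's proof is precisely to compose $F\colon \C M\rightleftarrows\C N\colon G$ with the monoidal Quillen equivalence $\C N\rightleftarrows\tilde{\C N}$ given by the identity functor and then invoke the preceding proposition, using that cofibrant mod $\unit$ in $\C N$ means cofibrant in $\tilde{\C N}$. You have simply made explicit the verifications (left Quillen property, persistence of the weak monoidality conditions, and the Quillen equivalence upgrade) that the paper leaves implicit in its one-sentence argument.
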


This corollary follows by composing $F\colon \C M\rightleftarrows\C N\colon G$ with the monoidal Quillen equivalence $\C N\rightleftarrows\tilde{\C N}$ defined by the identity functor. %The conditions on the tensor unit in the two previous results are automatically satisfied if $F$ is strong.

A map $f$ in a monoidal model category $\C M$ is a \emph{pseudo-cofibration} if $f\odot g$ and $g\odot f$ are (trivial) cofibrations whenever $g$ is a (trivial) cofibration, compare \cite[\S6]{dkhtec}. Cofibrations are examples of pseudo-cofibrations  and $\varnothing\r\unit$ too. If $\unit$ is cofibrant, pseudo-cofibrations are the same thing as cofibrations. An object $X$ in $\C M$ is \emph{pseudo-cofibrant} if $\varnothing\r X$  is a pseudo-cofibration. These objects were first considered by Lewis and Mandell \cite{mmmc} under the name of semicofibrant objects. They share many properties with cofibrant objects and have been very useful in \cite{htnso2, manso}.

Pseudo-cofibrations can be characterized as the maps satisfying the left lifting property with respect to a certain class of maps, compare the proof of \cite[Lemma 3.5]{ammmc}, hence they are closed under retracts, push-outs, and transfinite compositions. We deduce from Lemma \ref{10} that cofibrations in $\tilde{\C M}$ are pseudo-cofibrations in $\C M$. This inclusion may be strict, as we now see in examples.

\begin{prop}
If $\C M$ is any of the categories in Examples \ref{ex} and \ref{ex2}, ordinary stable cofibrations are pseudo\--co\-fib\-rations in the positive stable model structure.
\end{prop}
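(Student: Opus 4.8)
The plan is to leverage the two features of these diagram spectra categories that distinguish the ordinary and positive stable model structures: first, that they have the \emph{same} weak equivalences, namely the stable equivalences, and that positive stable cofibrations are ordinary stable cofibrations (recalled in Examples \ref{ex} and \ref{ex2}); second, the level-shifting behaviour of the smash product on free cells. Write $\odot$ for the push-out product. Since all these categories are symmetric monoidal, $g\odot f$ is, up to the symmetry isomorphism, the same map as $f\odot g$, so it suffices to control $f\odot g$. Fix an ordinary stable cofibration $f$. I must show that $f\odot g$ is a positive stable cofibration whenever $g$ is one, and a positive stable trivial cofibration whenever $g$ is one.

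First I would settle the \emph{cofibration case}. Because the push-out product is cocontinuous in each variable, and both the positive and the ordinary cofibrations are generated, under retracts, push-outs and transfinite composition, by the free cells, it suffices to treat generating cofibrations. So we may assume $f=F_c(i)$ and $g=F_d(j)$, where $F_c,F_d$ are the free (shift) functors from the base category $\operatorname{Top}_*$ or simplicial sets indexed by objects $c,d$ of the relevant domain category, where $i,j$ are generating cofibrations of the base, and where $d$ is \emph{positive}. The smash product of free cells is again free on the sum of the indices, $F_c(A)\wedge F_d(B)\cong F_{c\oplus d}(A\wedge B)$ up to an induction along the evident subgroup of automorphisms, so $f\odot g$ is isomorphic to $F_{c\oplus d}$ applied to the (equivariant) push-out product of $i$ and $j$, which is a cofibration of the base by its push-out product axiom. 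Positivity of $d$ forces positivity of $c\oplus d$, whence this is a free cell at a positive index and therefore a positive stable cofibration.

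Next comes the \emph{trivial cofibration case}, which the cofibration case reduces to a weak equivalence check. Suppose $g$ is a positive stable trivial cofibration. Since the two model structures have the same weak equivalences and positive cofibrations are ordinary cofibrations, $g$ is in particular an ordinary stable trivial cofibration. The ordinary stable model structure is itself monoidal, so its push-out product axiom gives that $f\odot g$ is an ordinary stable trivial cofibration, in particular a stable equivalence. On the other hand, the cofibration case just proved shows that $f\odot g$ is a positive stable cofibration (a trivial cofibration being in particular a cofibration). A positive stable cofibration which is a stable equivalence is a positive stable trivial cofibration, so $f\odot g$ is as required. Together with the symmetry remark, this proves that $f$ is a pseudo-cofibration.

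The main obstacle I anticipate is the bookkeeping in the cofibration case: checking that $F_{c\oplus d}$ applied to the induced equivariant push-out product of $i$ and $j$ really is a positive cofibration rather than merely an ordinary one. This is precisely the level-shifting content underlying the push-out product axiom for these categories, and the only genuinely new input is the elementary observation that a positive index stays positive under the sum $c\oplus d$; the equivariant freeness of the smash of free cells is part of the standard smash-product calculus for diagram spectra established in \cite{mcds} and, for simplicial symmetric spectra, in the references of Example \ref{ex}.
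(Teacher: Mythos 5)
Your proof is correct, but it follows a genuinely different route from the paper's. The paper avoids cells entirely: it quotes the characterization of positive stable (trivial) cofibrations as the ordinary stable (trivial) cofibrations $g$ for which $g_0$ is an isomorphism (compare \cite[Theorem 14.1]{mcds}; the trivial case uses that the two structures share weak equivalences), applies the ordinary push-out product axiom to conclude that $f\odot g$ is an ordinary (trivial) cofibration, and then observes that $(f\odot g)_0=f_0\odot g_0$ is a push-out product with an isomorphism, hence an isomorphism --- one uniform stroke for both cases. You instead split the cases: cofibrations via reduction to generating free cells and the identification $F_c(i)\odot F_d(j)\cong F_{c\oplus d}(i\odot j)$ with $c\oplus d$ positive; trivial cofibrations by essentially the mechanism the paper uses (agreement of weak equivalences plus the ordinary push-out product axiom), routed through your cofibration case. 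The paper's route buys brevity and no cell bookkeeping; yours buys independence from the characterization of \emph{all} positive cofibrations, using only the generating sets and the free-cell smash calculus. Three points to tighten in your write-up: (i) the reduction to generators is not literal cocontinuity of $\odot$ but the standard adjunction/saturation argument, which is \cite[Lemma 3.5]{ammmc} (or \cite[Lemma 4.2.4]{hmc}) and uses that these categories are \emph{closed} monoidal; (ii) the isomorphism $F_c(A)\wedge F_d(B)\cong F_{c\oplus d}(A\wedge B)$ is exact, so no ``induction along a subgroup'' correction is needed; (iii) for $\C W$-spaces, where levels are indexed by objects of $\C W$ and $\oplus$ is the smash product, the claim that positivity of $d$ forces positivity of $c\oplus d$ deserves a word: one needs that $c\wedge d\cong S^0$ forces $c\cong d\cong S^0$, and the degenerate case $c=\ast$ (where $F_{c\oplus d}$ takes trivial values) is harmless.
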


\begin{proof}
A positive stable cofibration $g\colon U\r V$ is the same as an ordinary stable cofibration such that $g_{0}\colon U_{0}\r V_{0}$ is an isomorphism, compare \cite[Theorem 14.1]{mcds}. If $f\colon X\r Y$ is an ordinary stable cofibration and $g\colon U\r V$ is a positive stable (trivial) cofibration, $f\odot g$ is an ordinary stable (trivial) cofibration by the push-out product axiom for the ordinary stable model structure \cite[Lemma 6.6 and Proposition 12.6]{mcds}. Moreover, $(f\odot g)_{0}$ is the push-out product of $f_{0}$ and the isomorphism $g_{0}$ in the category of pointed simplicial sets or compactly generated topological spaces with the smash product. Hence $(f\odot g)_{0}$ is an isomorphism, so $f\odot g$ is a positive stable (trivial) cofibration.
\end{proof}

\begin{cor}\label{quillen2}
If $\C M$ is any of the categories in Examples \ref{ex} and \ref{ex2}, there are pseudo-cofibrations in $\C M$ which are not cofibrations in $\tilde{\C M}$.
\end{cor}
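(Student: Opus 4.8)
The plan is to exhibit a single explicit map that is a pseudo-cofibration in $\C M$ but not a cofibration in $\tilde{\C M}$. The preceding proposition already supplies a large reservoir of pseudo-cofibrations, namely all ordinary stable cofibrations, and the discussion around Lemma \ref{10} tells us that every cofibration in $\tilde{\C M}$ is a pseudo-cofibration in $\C M$; so the entire content of the corollary is that this last inclusion is strict. My strategy is to isolate a crude invariant of $\tilde{\C M}$-cofibrations living in spectrum level $0$ and then to write down an ordinary stable cofibration that violates it.

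First I would analyse what a cofibration in $\tilde{\C M}$ does in level $0$. By Lemma \ref{10} a relative $\tilde I$-cell complex $X\r Y$ factors as $X\r X\amalg\unit^{(S)}\r Y$ with the second map a relative $I$-cell complex. Every map in $I$ is a positive stable cofibration, hence (as recalled in the proof of the preceding proposition) an ordinary stable cofibration which is an \emph{isomorphism} in level $0$; since isomorphisms are stable under pushout and transfinite composition, the relative $I$-cell complex is an isomorphism in level $0$. The first map is in level $0$ the inclusion $X_{0}\r X_{0}\vee\unit_{0}^{(S)}$, and $\unit_{0}=S^{0}$ is discrete, so its cofiber is the discrete object $\unit_{0}^{(S)}$. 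Evaluation in level $0$ preserves colimits, so the level-$0$ cofiber of $X\r Y$ is discrete. Finally I would observe that a general cofibration of $\tilde{\C M}$ is a retract of such a relative $\tilde I$-cell complex; since $\cof$ is a functor and discrete objects are closed under retracts, I conclude that every cofibration $f$ of $\tilde{\C M}$ has discrete level-$0$ cofiber $\cof(f)_{0}$.

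It then remains to produce an ordinary stable cofibration with non-discrete level-$0$ cofiber. I would take $f=F_{0}g$, where $F_{0}$ is the left adjoint of evaluation in level $0$ and $g$ is a non-invertible generating cofibration of the base category, say $g=(\partial\Delta^{1}_{+}\into\Delta^{1}_{+})$ for symmetric spectra of simplicial sets and $g=(S^{0}_{+}\into D^{1}_{+})$ in the topological examples. Since $F_{0}$ is left Quillen for the level model structure, whose cofibrations are exactly the ordinary stable cofibrations, $f$ is an ordinary stable cofibration, and hence a pseudo-cofibration in $\C M$ by the preceding proposition. On the other hand $(F_{0}g)_{0}=g$, so its level-$0$ cofiber is a simplicial (respectively topological) circle, which is not discrete. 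By the invariant established in the previous paragraph, $f$ is not a cofibration in $\tilde{\C M}$, which proves the corollary. The one point demanding care is the passage to retracts in the middle step: it is essential that the level-$0$ cofiber be genuinely invariant under retracts, which is precisely why I phrase the obstruction in terms of the functor $\cof$ rather than in terms of the explicit cell structure.
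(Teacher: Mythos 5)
Your proof is correct, but it takes a genuinely different route from the paper's. The paper's proof is a single sentence: it invokes the fact, established in Examples \ref{ex} and \ref{ex2}, that $\tilde{\C M}$ does not coincide with the ordinary stable model structure; there the witness was a trivial fibration in $\tilde{\C M}$ (the map $X'\r X$ built from a stably fibrant replacement of the sphere) which is not an ordinary stable trivial fibration. Since $\tilde{\C M}$ and the ordinary stable structure have the same weak equivalences and every cofibration of $\tilde{\C M}$ is an ordinary stable cofibration, equality of the two cofibration classes would force the two model structures to coincide; hence some ordinary stable cofibration, which is a pseudo-cofibration by the preceding proposition, fails to be a cofibration in $\tilde{\C M}$. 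In other words, the paper argues on the trivial-fibration side and transfers to the cofibration side by lifting-property duality, and its argument does not name the offending map. You instead argue directly on the cofibration side: from Lemma \ref{10} you extract a retract-stable obstruction (discreteness of the level-$0$ cofiber, valid because maps in $I$ are level-$0$ isomorphisms by the characterization of positive stable cofibrations quoted in the proposition, evaluation at level $0$ preserves colimits, and retracts of discrete objects are discrete), and then you exhibit the explicit ordinary stable cofibration $F_0(\partial\Delta^1_+\into\Delta^1_+)$, resp.\ $F_0(S^0_+\into D^1_+)$, whose level-$0$ cofiber is a circle; this uses correctly that $F_0$ sends cofibrations of the base category to ordinary stable cofibrations, the stable and level structures having the same cofibrations. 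Your argument is longer but self-contained and constructive: it does not rely on the computation carried out in the Examples, it produces a concrete pseudo-cofibration that is not a cofibration in $\tilde{\C M}$, and as a by-product it re-proves on the cofibration side the strictness $\tilde{\C M}\neq{}$ordinary stable structure, which the paper only verifies on the fibration side.
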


This follows from the fact that the model structure $\tilde{\C M}$ does not coincide with the ordinary stable model structure.

It would be interesting to know whether $\C M$ has, in general, a model structure with pseudo-cofibrations as cofibrations and the same weak equivalences. That would be a different way, maximal in some sense, of endowing $\C M$ with a model structure with the same weak equivalences and cofibrant tensor unit. It is unclear whether the methods of cofibrantly generated or combinatorial model categories might be useful to answer this question.

We conclude this paper with some applications to stable homotopy theory. The homotopy theory of small categories enriched in symmetric spectra of simplicial sets has been considered in \cite[Theorem 1.10]{htsc}. A recent result of Berger and Moerdijk \cite[Theorem 1.10]{htec} studies the homotopy theory of small categories enriched in a general $\C M$ under some assumptions, including cofibrancy of the tensor unit. In particular their theorem does not apply to the category $\C M_S$ of $S$-modules. 
This hypothesis is not required in \cite{dkhtec}, but combinatoriality is demanded, so model categories of topological nature, like $S$-modules, do not fit either. Nevertheless, Berger--Moerdijk's theorem does apply to $\tilde{\C M}_S$, as we will now see. We start by checking that $\C M_S$ is a symmetric monoidal model category (see Example \ref{ex3} for the very strong unit axiom).

\begin{prop}\label{sar}
The category of $S$-modules $\C M_S$ is cofibrantly generated and satisfies the push-out product axiom and the monoid axiom.
\end{prop}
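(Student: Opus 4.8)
The plan is to verify the three required properties of the category of $S$-modules $\C M_S$ by importing the relevant structural facts from the standard reference \cite{ekmm} and then checking compatibility with the monoidal structure. First I would establish that $\C M_S$ is cofibrantly generated. The model structure on $S$-modules is obtained from the model structure on spectra (or $\D L$-spectra) by a transfer/restriction argument, and explicit generating sets are produced in \cite[\S VII]{ekmm} from the generating sets for spaces via the functors $S\wedge_{\D L}(\cdots)$ and the free functors $F_n$. So the first step is simply to cite or recall that $I_S$ and $J_S$, the images under these constructions of the standard generating (trivial) cofibrations, serve as generating (trivial) cofibrations for $\C M_S$, and that the smallness conditions hold for cardinality reasons (as already noted in Example \ref{ex3}, all objects in $\operatorname{Top}_*$ are small relative to closed inclusions, and cofibrations in $\C M_S$ are spacewise closed inclusions).

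Next I would verify the push-out product axiom. The key input is that the smash product $\wedge_S$ of $S$-modules is built from the external smash product of $\D L$-spectra followed by the coequalizer defining $\wedge_S$, and that \cite{ekmm} proves the relevant pairing is compatible with cofibrations. Concretely, it suffices to check the push-out product axiom on generating (trivial) cofibrations, by the standard reduction since both classes are closed under the relevant cellular constructions. The generating cofibrations are, up to the monoidal functors, push-outs of sphere-spectrum cells, and their push-out products reduce to push-out products of cells in $\operatorname{Top}_*$ (which satisfy the axiom) after applying the strong symmetric monoidal structure of the relevant free functors. The point to emphasize is that $S\wedge_{\D L}(-)$ and the smash of $\D L$-spectra behave well enough on cells that the topological-level push-out product axiom transports upward.

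Finally I would check the monoid axiom, which requires that relative cell complexes built from maps of the form (trivial cofibration)$\wedge_S Z$, for arbitrary $S$-modules $Z$, are weak equivalences. Here the essential ingredient is that smashing an object with a level-wise acyclic cofibration of $S$-modules yields a weak equivalence, together with the fact that weak equivalences of $S$-modules are detected on homotopy groups and that these are preserved under the filtered colimits and push-outs appearing in transfinite cell constructions. The cleanest route is to invoke that cofibrations in $\C M_S$ are spacewise closed inclusions (Example \ref{ex3}) so that the relevant push-outs and transfinite compositions are homotopically well-behaved, combined with the analysis of smash products in \cite{ekmm}.

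The hard part will be the monoid axiom, since unlike the push-out product axiom it concerns smashing with \emph{arbitrary} (not necessarily cofibrant) $S$-modules $Z$ and closure under transfinite cellular constructions. One must argue that $J_S$-cofibrations remain weak equivalences after smashing with any $Z$ and that this property survives the cell-attachment colimits; this requires genuine control over the homotopical behaviour of $\wedge_S$ on non-cofibrant inputs, which is exactly where the spacewise-closed-inclusion property and the point-set analysis of \cite{ekmm} must be deployed carefully. The cofibrant generation and push-out product axiom, by contrast, are largely a matter of transporting known facts about $\operatorname{Top}_*$ and $\D L$-spectra through the monoidal functors defining $S$-modules.
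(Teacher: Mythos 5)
Your treatment of cofibrant generation and of the push-out product axiom is essentially the paper's own argument: one takes the explicit generating sets $I=\{S\wedge_{\C L}\mathbb L\Sigma^\infty_q S^n\r S\wedge_{\C L}\mathbb L\Sigma^\infty_q CS^n\}$ and $J=\{S\wedge_{\C L}\mathbb L\Sigma^\infty_q (CS^n\wedge\{0\}_+)\r S\wedge_{\C L}\mathbb L\Sigma^\infty_q (CS^n\wedge[0,1]_+)\}$ from \cite[\S VII]{ekmm}, reduces to generators via \cite[Lemma 3.5]{ammmc}, and uses the fact that $(S\wedge_{\C L}\mathbb L\Sigma^\infty_p X)\wedge_S(S\wedge_{\C L}\mathbb L\Sigma^\infty_q Y)\cong S\wedge_{\C L}\mathbb L\Sigma^\infty_{p+q}(X\wedge Y)$ to identify push-out products of generators with space-level push-out products (inclusions of subcomplexes, resp.\ of subcomplex deformation retracts, in CW-complexes). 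So far your plan and the paper's proof coincide.

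The genuine gap is in the monoid axiom, which you yourself flag as the hard part but then do not actually prove. Your sketch rests on the claim that the relevant weak equivalences ``are preserved under the filtered colimits and push-outs appearing in transfinite cell constructions,'' justified by the spacewise-closed-inclusion property. This is not enough: push-outs do not preserve weak equivalences in general, and the maps $Z\wedge_S g$ for $g\in J$ and arbitrary $Z$ are precisely \emph{not} cofibrations, so no left-properness or cofibration-based argument applies; closed inclusions alone carry no homotopical information across a push-out. The missing idea (due to Mandell, and the heart of the paper's proof) is that every map in $(\ob\C M_S)\wedge_S J$ is the inclusion of a \emph{strong deformation retract}: the generating trivial cofibrations arise by applying continuous functors to the SDR inclusion $CS^n\wedge\{0\}_+\subset CS^n\wedge[0,1]_+$, and smashing with any $S$-module $Z$ preserves this point-set SDR structure. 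Inclusions of strong deformation retracts \emph{are} closed under push-outs (the retraction and the deformation extend), so every map occurring in a relative $\bigl((\ob\C M_S)\wedge_S J\bigr)$-cell complex is an SDR inclusion, in particular a closed inclusion and a weak equivalence; one then concludes by the fact that transfinite compositions in $\operatorname{Top}_*$ of closed inclusions which are weak equivalences are weak equivalences \cite[Lemma 2.4.8]{hmc}. Without replacing ``weak equivalence'' by a property that is actually stable under push-out (the SDR property), your argument for the monoid axiom does not go through.
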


\begin{proof}
Two sets of generating cofibrations and generating trivial cofibrations in $\C M_S$ are $I=\{f_{q,n}\colon S\wedge_{\C L}\mathbb L\Sigma^\infty_q S^n\r S\wedge_{\C L}\mathbb L\Sigma^\infty_q CS^n\}_{q,n\geq 0}$ and $J=\{g_{q,n}\colon S\wedge_{\C L}\mathbb L\Sigma^\infty_q (CS^n\wedge \{0\}_+)\r S\wedge_{\C L}\mathbb L\Sigma^\infty_q (CS^n\wedge [0,1]_+)\}_{q,n\geq 0}$, respectively, see \cite[Theorem VII.4.14 and the proof of Lemma VII.5.6]{ekmm}. We use the criteria in \cite[Lemma 3.5]{ammmc} to check the two axioms.  The push-out product of two generating cofibrations $f_{p,m}\odot f_{q,n}$ is the $S$-module map obtained by applying the functor $S\wedge_{\C L}\mathbb L\Sigma^\infty_{p+q}-$ to the following map of spaces
$$(S^m\wedge CS^n)\cup_{S^m\wedge S^n}(CS^m\wedge S^n)
\To 
CS^m\wedge CS^n,
$$
see \cite[Proposition II.3.6]{esht} and \cite[Propositions I.6.1, I.8.2 and Definition II.1.1]{ekmm}. This map of spaces is the inclusion of a subcomplex in a CW-complex, hence $f_{p,m}\odot f_{q,n}$ is a cofibration in $\C M_S$. The push-out product of a generating cofibration and a generating trivial cofibration $f_{p,m}\odot g_{q,n}$ is obtained in the same way from the map,
$$(S^m\wedge CS^n\wedge[0,1]_+)\cup_{S^m\wedge S^n\wedge\{0\}_+}(CS^m\wedge S^n\wedge\{0\}_+)
\To 
CS^m\wedge CS^n\wedge[0,1]_+.
$$
This map is the inclusion of a subcomplex which is a deformation retract in a CW-complex, hence $f_{p,m}\odot g_{q,n}$ is a trivial cofibration in $\C M_S$. This proves the push-out product axiom.

The following proof of the monoid axiom is due to Mandell \cite{mandell}. Notice that any map in $$(\ob\C M_S)\wedge_S J=\{X\wedge_{\C L}\mathbb L\Sigma^\infty_q (CS^n\wedge \{0\}_+)\r X\wedge_{\C L}\mathbb L\Sigma^\infty_q (CS^n\wedge [0,1]_+)\}_{\substack{q,n\geq 0\\X\in\ob\C M_S}}$$ is the inclusion of a strong deformation retract. This property is preserved under push-outs. Therefore, it is enough to notice that the transfinite composition in $\operatorname{Top}_*$ of closed inclusions which are also weak equivalences is a weak equivalence \cite[Lemma 2.4.8]{hmc}. 
% Let $\lambda\r\operatorname{Top}\colon\alpha\mapsto X_\alpha$ be a continuous sequence of closed inclusions between compactly generated topological spaces indexed by an ordinal $\lambda$. By \cite[Proposition 2.4.2]{hmc},
% $\pi_0(\colim_{\alpha<\lambda}X_\alpha)=\colim_{\alpha<\lambda}\pi_0(X_\alpha)$ and $\pi_n(\colim_{\alpha<\lambda}X_\alpha,x)=\colim_{\alpha<\lambda}\pi_n(X_\alpha,x)$ for any $n>0$ and $x\in X_0$. Hence, if in addition all  successor bonding maps $X_\alpha\r X_{\alpha+1}$, $\alpha+1<\lambda$, are weak equivalences, we derive that the transfinite composition $X_0\r\colim_{\alpha<\lambda}X_\alpha$ is also a weak equivalence.
\end{proof}

We continue by checking the technical hypotheses of Berger--Moerdijk's theorem.

\begin{lem}
Both $\C M_S$ and $\tilde{\C M}_S$ are compactly generated in the sense of \cite[Definition 1.2]{htec}.
\end{lem}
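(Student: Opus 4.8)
The plan is to unwind the definition of ``compactly generated'' from \cite[Definition 1.2]{htec} and check each requirement for both model categories, exploiting the fact that they share the same underlying category and weak equivalences. Berger--Moerdijk's notion demands a monoidal fibrant replacement functor together with a suitable interval, and more fundamentally that the model category be cofibrantly generated with generating (trivial) cofibrations whose domains and codomains are \emph{compact} (small relative to a large enough class, typically all objects or all spacewise closed inclusions) and that the generating trivial cofibrations be ``$h$-cofibrations'' (maps with the homotopy extension property) whose pushouts and transfinite composites along themselves are weak equivalences. Since $\C M_S$ has been shown in Proposition \ref{sar} to be a cofibrantly generated symmetric monoidal model category satisfying the monoid axiom, and since Example \ref{ex3} records that cofibrations in $\C M_S$ are spacewise closed inclusions and that all objects are small relative to closed inclusions, most of the verification for $\C M_S$ should reduce to citing these facts together with the explicit generating sets $I$ and $J$ from the proof of Proposition \ref{sar}.

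First I would treat $\C M_S$. The generating cofibrations and trivial cofibrations are built by applying $S\wedge_{\C L}\mathbb{L}\Sigma^\infty_q-$ to CW-inclusions of pointed spaces, so their domains and codomains are compact by the cardinality argument already invoked in Example \ref{ex3}; the generating trivial cofibrations $g_{q,n}$ are inclusions of strong deformation retracts, hence $h$-cofibrations, and the monoid-axiom argument in Proposition \ref{sar} (push-outs and transfinite composites of closed-inclusion weak equivalences are weak equivalences) supplies exactly the homotopical stability Berger--Moerdijk require. The interval and monoidal fibrant replacement come from the fact that $\C M_S$ is tensored over $\operatorname{Top}_*$ with a commutative monoidal fibrant replacement, and all objects are fibrant, so the fibrant replacement functor can be taken to be the identity.

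Next I would transfer everything to $\tilde{\C M}_S$. Here the point is that $\tilde{\C M}_S$ has the \emph{same} weak equivalences as $\C M_S$ and that its generating sets $\tilde I=I\cup\{\varnothing\r\unit\}$ and $\tilde J=J\cup\{ji_1\}$ differ from those of $\C M_S$ only by the two new maps. Example \ref{ex3} already verifies that pushouts of $\varnothing\r\unit$ and of $ji_1$ are spacewise closed inclusions and that the smallness condition holds for $\tilde I$-cell and $\tilde J$-cell; thus the compactness of domains and codomains passes over, the extra generators being cofibrations built from compact data. The one genuinely new map to inspect is $ji_1\colon\tilde\unit\r C$: I would argue it is an $h$-cofibration (its pushouts being spacewise closed inclusions, as in Example \ref{ex3}) and that relative $\tilde J$-cell complexes are weak equivalences, which is precisely what was proved inside the proof of Theorem \ref{2} via Lemma \ref{white} and the monoid axiom.

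The main obstacle I anticipate is verifying the homotopy-extension and deformation-retract properties for the new generator $ji_1$ and its pushouts, since $C$ is produced by an abstract factorization rather than an explicit CW-construction. I expect this to be controlled by the concrete model of the factorization described in the Remark after Theorem \ref{2}: one takes a cylinder on $\tilde\unit$ and a pushout along $q$, so $ji_1$ is itself, up to the retraction exhibited in Example \ref{ex3}, a cofibration of $S$-modules composed with the elementary inclusion $X\r X\amalg\unit$. Once $ji_1$ is recognized as a spacewise closed inclusion that is an $h$-cofibration and a weak equivalence, the remaining Berger--Moerdijk axioms follow formally from the $\C M_S$ case, since weak equivalences and the monoidal structure are unchanged.
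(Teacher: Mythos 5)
There is a genuine gap, and it is precisely the point on which the paper's proof turns. Berger--Moerdijk's Definition 1.2 does not ask for smallness relative to cofibrations or relative to $\tilde I$-cell and $\tilde J$-cell complexes; it asks that all objects be small (and domains/codomains of generating cofibrations be finite) relative to \emph{$\otimes$-cofibrations}, i.e.\ the saturation of the class of maps $X\wedge_S i$ where $i$ is a (generating) cofibration and $X$ is an \emph{arbitrary} $S$-module. Since arbitrary $S$-modules are not cofibrant (indeed the unit $S$ itself is not cofibrant in $\C M_S$), the push-out product axiom gives no control over $X\wedge_S i$: such maps need not be cofibrations at all. Consequently, every fact you cite --- that cofibrations in $\C M_S$ are spacewise closed inclusions, that pushouts of $\varnothing\r\unit$ and $ji_1$ are spacewise closed inclusions, that smallness holds relative to $\tilde I$-cell and $\tilde J$-cell --- addresses only the class of cofibrations, not the strictly larger class of $\wedge_S$-cofibrations, and so does not verify Definition 1.2 for either model structure.

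The missing idea is the following: if $f$ is an $h$-cofibration of $S$-modules and $X$ is any $S$-module, then $X\wedge_S f$ is again an $h$-cofibration, because the homotopy extension property for $X\wedge_S f$ with respect to $Y$ is adjoint to the homotopy extension property for $f$ with respect to the internal morphism object $\hom_{\C M_S}(X,Y)$. Combining this with the observations that the maps in $I$ are $h$-cofibrations (they are obtained by applying $S\wedge_{\C L}\mathbb L\Sigma^\infty_q$ to $h$-cofibrations of spaces) and that $\varnothing\r\unit$ is an $h$-cofibration, one concludes that \emph{every} $\wedge_S$-cofibration in $\tilde{\C M}_S$ (hence also in $\C M_S$) is an $h$-cofibration, therefore a spacewise closed inclusion; smallness of all objects and finiteness of the sources of $I$ (which come from compact spaces) then follow from the corresponding facts in $\operatorname{Top}_*$. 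Without this adjunction argument your reduction to the $\C M_S$ case does not go through. Two lesser points: the monoidal fibrant replacement functor and the generating set of intervals are hypotheses of Berger--Moerdijk's Theorem 1.10, not part of Definition 1.2 (the paper checks the intervals in a separate lemma), and the map $ji_1$ lies in $\tilde J$, which plays no role in compact generation, so your worry about its homotopy extension property is beside the point here.
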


\begin{proof}
It is enough to check that any object (resp.~any source of a map in $I$) is small (resp.~finite) relative to $\wedge_S$-cofibrations in  $\tilde{\C M}_S$ (where $\otimes=\wedge_S$), see \cite[Definition 1.2 and the paragraph preceding Lemma 1.3]{htec}. Our argument is based in the notion of $h$-cofibration recalled in Example \ref{ex2}, which also makes sense for $S$-modules. If $X$ is any $S$-module and $f$ is an $h$-cofibration, then $X\wedge_S f$ is also an $h$ cofibration since the homotopy extension property for $X\wedge_S f$ with respect to $Y$ is equivalent to the homotopy extension property for $f$ with respect to the internal morphism object $\hom_{\C M_S}(X,Y)$. With the choice in the proof of Proposition \ref{sar}, all maps in $I$ are $h$-cofibrations, since they are obtained by applying $S\wedge_{\C L}\mathbb L\Sigma^\infty_q $ to $h$-cofibrations in $\operatorname{Top}_*$. The map $\varnothing\r\unit$ is an $h$-cofibration for obvious reasons. Therefore any $\wedge_S$-cofibration in $\tilde{\C M}
_S$ is and $h$-cofibration, and in particular a spacewise closed inclusion in $\operatorname{Top}_*$. All spaces in $\operatorname{Top}_*$ are small relative to closed inclusions, and compact spaces are even 
finite. Hence, all objects in $\C M_S$ are small relative to $\wedge_S$-cofibrations and, moreover, the sources of $I$ are finite since they are obtained by applying $S\wedge_{\C L}\mathbb L\Sigma^\infty_q$ to compact spaces (spheres).
\end{proof}

The two previous results ensure the existence of two model structures on the category $\cats{\C M_S}{C}$ of small categories enriched in $S$-modules with a fixed set of objects $C$, see \cite[Remark 3.2 and Proposition 3.3 (2)]{amseco}. Weak equivalences and fibrations are defined locally \cite[Definition 1.6]{htec}, either in $\C M_S$ or in $\tilde{\C M}_S$. These two model structures with the same weak equivalences will be denoted by $\cats{\C M_S}{C}$ and $\cats{\tilde{\C M}_S}{C}$, respectively. We now check the existence of generating sets of intervals in the sense of \cite[Definition 1.11]{htec}.

\begin{lem}
There exist generating sets of $\C M_S$-intervals and $\tilde{\C M}_S$-intervals.
\end{lem}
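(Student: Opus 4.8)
The plan is to exhibit, in each of the two model categories, a single explicit interval and to argue that the one-element set it spans is already generating, so that no infinite families are needed. First I would recall what \cite[Definition 1.11]{htec} demands: an interval is, roughly, a cofibrant two-object $\C M_S$-category $\mathbf H$ resolving the \emph{walking isomorphism} (the indiscrete category on two objects with all hom-objects equal to $\unit$), and a set $\C G$ of such intervals is \emph{generating} when the endpoint inclusions $\{0\}\r\mathbf H$ for $\mathbf H\in\C G$, together with the free-category images of the generating trivial cofibrations of $\C M_S$, detect the trivial fibrations (equivalently the fibrant objects) of the Dwyer--Kan model structure on enriched categories with varying object sets. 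Thus it suffices to build one such $\mathbf H$ in $\C M_S$ and one in $\tilde{\C M}_S$ and to verify this detection property.

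For the construction I would use that both $\C M_S$ and $\tilde{\C M}_S$ are tensored over $\operatorname{Top}_*$, as in Example \ref{ex3}. The topological interval $[0,1]$ carries a cocommutative, coassociative comultiplication (the diagonal together with the collapse $[0,1]\r\ast$) with absorbing endpoints, so tensoring the unit gives a cylinder $H=\unit\otimes[0,1]_+$ on $\unit$, with endpoint map $\unit\amalg\unit=\unit\otimes\{0,1\}_+\r H$ and collapse $H\r\unit$; the latter is a weak equivalence in both structures since it is a spacewise homotopy equivalence and the two structures share weak equivalences. The comultiplication and endpoints then promote the two-object category $\mathbf H$ with hom-objects copies of $H$ to an interval resolving the walking isomorphism, and I would take $\C G=\{\mathbf H\}$ in each case.

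The cofibrancy requirement is where the two cases diverge, and this is the delicate point. In $\tilde{\C M}_S$ the tensor unit is cofibrant, so the topological cofibration $\{0,1\}_+\r[0,1]_+$ together with the push-out product axiom for the tensoring over $\operatorname{Top}_*$ makes $\unit\amalg\unit\r H$ a genuine cofibration and $\mathbf H$ a cofibrant interval; this is exactly the situation covered by Berger--Moerdijk, and I expect it to go through directly. In $\C M_S$ the unit is not cofibrant, so $\unit\amalg\unit\r H$ need not be a cofibration. Instead I would check that it is an $h$-cofibration, hence a spacewise closed inclusion, exactly as in Example \ref{ex3}, using that $\unit\otimes-$ preserves $h$-cofibrations; this keeps every map in the relevant transfinite constructions a spacewise closed inclusion, and combined with the smallness already established in the previous lemma it lets the interval axioms be met without genuine cofibrancy of the unit.

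The hard part will be the generating (detection) property: showing that a local fibration $f\colon\C A\r\C B$ between fibrant $S$-module categories which is a local weak equivalence and has the right lifting property against the endpoint inclusion $\{0\}\r\mathbf H$ is automatically a Dwyer--Kan equivalence, i.e.\ homotopically essentially surjective. By adjunction this reduces to a statement internal to $\C M_S$, namely that homotopies between maps out of $\unit$, parametrized by $H$, can be lifted through a local trivial fibration and that the object-level relation they detect is precisely isomorphism in the homotopy category. Here I would exploit the characterization of fibrant objects from Example \ref{ex3} --- any map $S\r X$ into a fibrant object is homotopic to a map of $S$-modules --- to transport homotopies between the two model structures and to pin down the equivalence relation detected by $\mathbf H$; the same argument, with genuine cofibrations replacing $h$-cofibrations, settles the $\tilde{\C M}_S$ case simultaneously.
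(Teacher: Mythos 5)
Your proposal rests on a misreading of both halves of the statement, and the resulting gaps are fatal. First, a $\C M_S$-interval is by definition a \emph{cofibrant} object of $\cats{\C M_S}{\{0,1\}}$ weakly equivalent to the codiscrete category, and cofibrancy cannot be bargained away: your explicit candidate built from $H=\unit\otimes[0,1]_+$ is not cofibrant in $\cats{\C M_S}{\{0,1\}}$, precisely because $\unit$ is not cofibrant in $\C M_S$, and substituting $h$-cofibrations for cofibrations does not ``let the interval axioms be met'' --- it produces an object that simply is not an interval. (Even in $\tilde{\C M}_S$, where $\unit$ is cofibrant, you would still owe a proof that your two-object category is cofibrant \emph{as an enriched category} with fixed object set; that does not follow from $\unit\amalg\unit\r H$ being a cofibration.) Second, ``generating'' in \cite[Definition 1.11]{htec} is not the detection/lifting property against Dwyer--Kan trivial fibrations that you describe; it is the requirement that every interval $\mathbb H$ be a retract of an interval receiving a trivial cofibration from a member of the set. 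The ``hard part'' you plan to prove --- that endpoint inclusions detect homotopical essential surjectivity --- is part of Berger--Moerdijk's theorem itself, which takes a generating set of intervals as an \emph{input}; proving it is not what this lemma asks for, and attempting it here amounts to reproving their Theorem 1.10.

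The paper's proof is short and purely formal, with no explicit interval at all. For $\C M_S$: every object is fibrant, so by \cite[Lemma 2.1]{htec} a single $\C M_S$-interval $\mathbb G$ generates, and the retraction can moreover be taken to be a weak equivalence. For $\tilde{\C M}_S$ this lemma is \emph{not} available --- contrary to your claim that this case ``goes through directly,'' not all objects of $\tilde{\C M}_S$ are fibrant (see Example \ref{ex3}) --- and this is where the real content lies: $\mathbb G$ is also an $\tilde{\C M}_S$-interval because $\tilde{\C M}_S$ has more cofibrations and the same weak equivalences; then, given any $\tilde{\C M}_S$-interval $\mathbb H$, one takes a cofibrant resolution $\tilde{\mathbb H}\st{\sim}\r\mathbb H$ in $\cats{\C M_S}{\{0,1\}}$, notes that $\tilde{\mathbb H}$ is an $\C M_S$-interval so that there is a weak equivalence $\mathbb G\st{\sim}\r\tilde{\mathbb H}$, factors the composite in $\cats{\tilde{\C M}_S}{\{0,1\}}$ as $\mathbb G\st{\sim}\into\mathbb K\st{\sim}\onto\mathbb H$, and observes that the trivial fibration admits a section because $\mathbb H$ is cofibrant there. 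None of these steps --- the appeal to fibrancy of all objects of $\C M_S$, the transfer of intervals from $\C M_S$ to $\tilde{\C M}_S$, and the resolution-plus-factorization argument --- appears in your proposal.
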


\begin{proof}
A single $\C M_S$-interval $\mathbb G$ generates since all objects in $\C M_S$ are fibrant. Moreover, the retraction in \cite[Definition 1.11]{htec} can be taken to be a weak equivalence, see \cite[Lemma 2.1]{htec} and its proof. An $\C M_S$-interval is also an $\tilde{\C M}_S$-interval since $\tilde{\C M}_S$ has more cofibrations than $\C M_S$. Let us chech that $\{\mathbb G\}$ is also a generating set of $\tilde{\C M}_S$-intervals.

Any $\tilde{\C M}_S$-interval $\mathbb H$ has a cofibrant resolution $\tilde{\mathbb H}\st{\sim}\r\mathbb H$ in $\cats{\C M_S}{\{0,1\}}$. This $\tilde{\mathbb H}$ is an $\C M_S$-interval by the very definition, so there exists a weak equivalence $\mathbb G\st{\sim}\r\tilde{\mathbb H}$. We factor the composite $\mathbb G\st{\sim}\r\tilde{\mathbb H}\st{\sim}\r\mathbb H$ into a trivial cofibration followed by a trivial fibration in $\cats{\tilde{\C M}_S}{\{0,1\}}$, $\mathbb G\st{\sim}\into \mathbb K\st{\sim}\onto\mathbb H$. The trivial fibration is a retraction since $\tilde{\C M}_S$-intervals are cofibrant in $\cats{\tilde{\C M}}{\{0,1\}}$.
\end{proof}

The following result is a consequence of \cite[Theorems 1.10 and 2.5]{htec}, whose hypotheses have been checked above.

\begin{prop}
The category $\cats{\C M_S}{}$ of all small categories enriched in $S$-modules has a cofibrantly generated right proper model structure where weak equivalences are Dwyer--Kan equivalences \cite[Definition 2.17]{htec} and trivial fibrations are enriched functors surjective on objects which are local surjective trivial fibrations in $\C M_S$.
\end{prop}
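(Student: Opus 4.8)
The plan is to apply Berger--Moerdijk's theorem not to $\C M_S$ directly, which is impossible because its tensor unit fails to be cofibrant, but to $\tilde{\C M}_S$, and then to re-express the resulting model structure in terms of $\C M_S$. This is exactly the point of the construction of $\tilde{\C M}_S$: cofibrancy of the unit is among the standing hypotheses of \cite[Theorem 1.10]{htec}, and it is the obstruction that prevents a direct application to $S$-modules.

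First I would collect the hypotheses of \cite[Theorem 1.10]{htec} for $\tilde{\C M}_S$. By Proposition \ref{sar}, Example \ref{ex3}, and Theorem \ref{1}, the category $\tilde{\C M}_S$ is a symmetric monoidal model category with cofibrant tensor unit satisfying the monoid axiom, and it is right proper since $\C M_S$ is. The two preceding lemmas show that $\tilde{\C M}_S$ is compactly generated and admits a generating set of $\tilde{\C M}_S$-intervals. Feeding these into \cite[Theorem 1.10]{htec} produces a cofibrantly generated model structure on $\cats{\tilde{\C M}_S}{}$ whose weak equivalences are the Dwyer--Kan equivalences and whose trivial fibrations are the enriched functors surjective on objects that are local trivial fibrations in $\tilde{\C M}_S$, while \cite[Theorem 2.5]{htec} supplies right properness.

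Second, I would translate this structure into the language of $\C M_S$. Since $\tilde{\C M}_S$ and $\C M_S$ share the same weak equivalences by Theorem \ref{1}, their local weak equivalences agree, and hence so do their Dwyer--Kan equivalences, which are formulated purely in terms of the local weak equivalences together with essential surjectivity at the level of homotopy categories. Moreover, again by Theorem \ref{1}, the trivial fibrations of $\tilde{\C M}_S$ are precisely the surjective trivial fibrations of $\C M_S$, so a local trivial fibration in $\tilde{\C M}_S$ is exactly a local surjective trivial fibration in $\C M_S$. Thus the class of trivial fibrations in the statement coincides with the one produced above; since a model structure is determined by its weak equivalences and trivial fibrations, the model structure asserted in the proposition is literally the Berger--Moerdijk model structure on $\cats{\tilde{\C M}_S}{}$, and right properness is inherited.

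The argument is essentially bookkeeping once one recognizes that $\tilde{\C M}_S$ is the correct input, the genuinely new content being the verification of compact generation and of a generating set of intervals carried out above. The only step demanding care is the identification of the Dwyer--Kan equivalences across the two model structures, and this is safe precisely because Dwyer--Kan equivalence is a notion depending only on the underlying class of weak equivalences, which $\C M_S$ and $\tilde{\C M}_S$ have in common.
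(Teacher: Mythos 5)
Your argument is, in structure, exactly the paper's own proof: the paper's proof of this proposition consists precisely of applying \cite[Theorems 1.10 and 2.5]{htec} to $\tilde{\C M}_S$, whose hypotheses (symmetric monoidal structure with cofibrant unit, monoid axiom, compact generation, generating set of intervals, right properness) are the content of Proposition \ref{sar}, Example \ref{ex3} and the two lemmas preceding the statement, and then reading off the trivial fibrations and Dwyer--Kan equivalences in terms of $\C M_S$. Your discussion of why the Dwyer--Kan equivalences and the trivial fibrations can be identified across the two model structures is correct and in fact slightly more explicit than the paper.

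There is, however, one flaw in your references that needs repair: you invoke Theorem \ref{1} three times for $S$-modules, and Theorem \ref{1} does not apply to $\C M_S$. Its hypothesis is that the monoidal model category be \emph{combinatorial}, and $\C M_S$ is a topological, not locally presentable, model category; this is exactly why the paper proves Theorem \ref{2} with its ``weaker but uglier'' smallness hypotheses, and it is also the obstruction (noted in the discussion before Proposition \ref{sar}) to using the combinatorial methods of \cite{dkhtec} for $S$-modules. The correct chain is: Example \ref{ex3} verifies the hypotheses of Theorem \ref{2} for $\C M_S$, and Theorem \ref{2} then produces $\tilde{\C M}_S$ with the same weak equivalences. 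The third fact you take from Theorem \ref{1} --- that the trivial fibrations of $\tilde{\C M}_S$ are precisely the surjective trivial fibrations of $\C M_S$ --- appears only in the statement of Theorem \ref{1}, but it does follow for the structure produced by Theorem \ref{2}: the generating cofibrations there are $\tilde I=I\cup\{\varnothing\r\unit\}$, and a map is $\tilde I$-injective if and only if it is $I$-injective (i.e.\ a trivial fibration in $\C M_S$) and has the right lifting property against $\varnothing\r\unit$ (i.e.\ is surjective in the sense of the paper). With these citations corrected, your proof coincides with the paper's.
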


This result is also valid if we replace $\C M_S$ with any of the categories $\C M$ in Examples \ref{ex} and \ref{ex2}, however it is less interesting since Berger--Moerdijk's theorem applies directly to the ordinary stable model structures. All these model structures on enriched categories are Quillen equivalent. This can be shown by using the strong symmetric monoidal Quillen equivalences $\operatorname{Sp}^\Sigma\rightleftarrows\Sigma \C S\rightleftarrows\C I\C S\rightleftarrows\C W\C T$ and $\Sigma \C S\rightleftarrows\C M_S$ in \cite{mcds, smss}, see \cite[Corollary 1.14 and the paragraph afterwards]{htec} and Corollary \ref{quillen2}.

Caviglia's \cite[Proposition 3.3 (2)]{amseco} also implies the existence of model structures on nonsymmetric  coloured operads and reduced symmetric coloured operads with a fixed ser of colours enriched in $\C M_S$ or  $\tilde{\C M}_S$. Reduced means that there are no non-trivial arity zero operations. Weak equivalences and fibrations are defined locally, see \cite[Definition 4.5]{amseco}. The following result is a consequence of \cite[Lemma 4.8, Theorem 4.22 and Propositions 4.25 and 5.4]{amseco}. The hypotheses have been checked above.

\begin{prop}
The category of all nonsymmetric  coloured operads and the category of all reduced symmetric coloured operads enriched in $S$-modules have a cofibrantly generated right proper model structure where weak equivalences are Dwyer--Kan equivalences \cite[Definition 4.24]{amseco} and trivial fibrations are local surjective trivial fibrations in $\C M_S$ which are surjective on colours.
\end{prop}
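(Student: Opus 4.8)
The plan is to apply Caviglia's construction \cite[Lemma 4.8, Theorem 4.22 and Propositions 4.25 and 5.4]{amseco} with $\tilde{\C M}_S$ as the base symmetric monoidal model category, in exact parallel with the way Berger--Moerdijk's theorem was used for enriched categories above. The reason for passing to $\tilde{\C M}_S$ rather than working directly in $\C M_S$ is that this machinery requires a cofibrant tensor unit, which $\C M_S$ does not have but which $\tilde{\C M}_S$ provides by construction (Theorem \ref{2}, see Example \ref{ex3}).

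First I would record that $\tilde{\C M}_S$ satisfies every hypothesis demanded by the cited results. It is a cofibrantly generated symmetric monoidal model category which is right proper and satisfies the monoid axiom: this follows from Theorem \ref{2} applied to $\C M_S$, whose push-out product axiom, monoid axiom and cofibrant generation are established in Proposition \ref{sar} and whose very strong unit axiom and right properness are recorded in Example \ref{ex3}; and its tensor unit is cofibrant by the very definition of $\tilde{\C M}_S$. Compact generation of $\tilde{\C M}_S$, together with the remaining technical inputs, is exactly what the lemmas above supply. These are the facts meant by the phrase ``the hypotheses have been checked above''.

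Next I would translate the output of Caviglia's theorem from $\tilde{\C M}_S$ into the language of $\C M_S$. Since $\tilde{\C M}_S$ and $\C M_S$ share the same weak equivalences, the locally defined Dwyer--Kan equivalences \cite[Definition 4.24]{amseco} coincide for the two base categories, so the weak equivalences in the statement are unambiguous. Caviglia's theorem produces trivial fibrations that are surjective on colours and local trivial fibrations in $\tilde{\C M}_S$; by the very definition of $\tilde I$ in Theorem \ref{2} the trivial fibrations of $\tilde{\C M}_S$ are precisely the surjective trivial fibrations of $\C M_S$, which rewrites the description into the asserted form. Right properness is inherited directly from right properness of $\tilde{\C M}_S$ through the same propositions, and the reduced symmetric case is accommodated by the arity-zero restriction built into Caviglia's framework.

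The only substantive content is in the second paragraph, and the obstacle it removes is the cofibrancy of the tensor unit: Caviglia's treatment of operadic units, and for reduced symmetric operads the interaction of the unit with the nullary part, uses this cofibrancy essentially, and it is the single feature of $\C M_S$ that fails. Theorem \ref{2} is precisely what repairs this; once the compact generation hypothesis is in place, as it already is above, the proposition follows formally with no further analysis.
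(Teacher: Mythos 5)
Your proposal is correct and follows essentially the same route as the paper: apply Caviglia's results with $\tilde{\C M}_S$ as the base category (made possible because Theorem \ref{2}, via Proposition \ref{sar} and Example \ref{ex3}, gives a cofibrant unit while the preceding lemmas supply compact generation and the interval-type hypotheses), then translate the trivial fibrations of $\tilde{\C M}_S$ back into $\C M_S$ terms as surjective trivial fibrations. The paper's own proof is just the sentence ``the hypotheses have been checked above,'' so your write-up is in effect an expanded version of exactly that argument.
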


Again, this result is valid but maybe not very relevant for the categories of Examples \ref{ex} and \ref{ex2}, since the aforementioned Caviglia's results apply to the ordinary stable model structures.

\subsection*{Acknowledgements}

The author is very grateful to Andrew Blumberg, Javier J.~Guti\'errez, Mark Hovey, Michael A.~Mandell, Peter May, Stefan Schwede, Brooke Shipley, and David White for illuminating conversations related to the contents of this paper. He was partially supported
by the Andalusian Ministry of Economy, Innovation and Science under the grant FQM-5713 and by the Spanish Ministry of Economy and Competitiveness under the MEC-FEDER grant  MTM2013-42178.

% ----------------------------------------------------------------
% \bibliographystyle{amsalpha}
% \bibliography{../Fernando}

\providecommand{\bysame}{\leavevmode\hbox to3em{\hrulefill}\thinspace}
\providecommand{\MR}{\relax\ifhmode\unskip\space\fi MR }
% \MRhref is called by the amsart/book/proc definition of \MR.
\providecommand{\MRhref}[2]{%
  \href{http://www.ams.org/mathscinet-getitem?mr=#1}{#2}
}
\providecommand{\href}[2]{#2}

\end{document}